\newtheorem{theorem}{Theorem}[subsection]
\newtheorem{thm}{Main Theorem}[section]
\newtheorem{cor}[thm]{Corollary}
\newtheorem*{mthm}{Main Theorem}
\newtheorem{proposition}[theorem]{Proposition}
\newtheorem{corollary}[theorem]{Corollary}
\newtheorem{claim}[theorem]{Claim}
\newtheorem{lemma}[theorem]{Lemma}
\newtheorem{remark}[theorem]{Remark}
\newtheorem{example}[theorem]{Example}
\newtheorem{definition}[theorem]{Definition}
\newtheorem{summary}[theorem]{Summary}
\newtheorem{fact}[theorem]{Fact}
\newtheorem*{theorem-non}{Theorem}
\DeclareMathOperator{\Spec}{Spec}
\DeclareMathOperator{\Hom}{Hom}
\DeclareMathOperator{\cha}{char}
\DeclareMathOperator{\Sym}{Sym}
\DeclareMathOperator{\norm}{norm}
\DeclareMathOperator{\Proj}{Proj}
\address{
 Courant Institute of Mathematical Sciences,
 New York University, 
 251 Mercer Street, 
 New York, NY 10012, USA }
\email{buonerba@cims.nyu.edu}
\title{Functorial resolution of tame quotient singularities in positive characteristic}
\author{Federico Buonerba}
\begin{document}

\begin{abstract} 
The object of the present is a proof of the existence of functorial resolution of tame quotient singularities for quasi-projective varieties over algebraically closed fields.
\end{abstract}

\maketitle
\tableofcontents
%\section{}
%\subsection{}
\markboth{FEDERICO BUONERBA}{FUNCTORIAL RESOLUTION OF TAME QUOTIENT SINGULARITIES}

The role of quotient singularities, in the general problem of the resolution of singularities in positive characteristic, has been highlighted in the fundamental work of de Jong \cite{4}, where the general problem has been reduced to the more specific one of understanding singularities created by inseparable morphisms and group actions.

\begin{theorem-non}[\cite{4}, 7.4]
\textit{Let $X$ be a projective variety over an algebraically closed field. There exists a radicial morphism $Y\to X$ and a modification $Z\to Y$ such that $Z$ has at worst quotient singularities.}
\end{theorem-non}

A huge class of group actions enjoy the property of being linearizable, i.e. the action is formally equivalent to that of a group of linear endomorphisms of vector space. In this situation the singularities created by the action are easier to handle, since the existence of formal coordinates along which the action is linear provides a rich amount of information. By way of examples, all tame, \ref{tame}, abelian group actions create singularities that live, \'etale-locally, in the world of toric varieties, see \ref{toric}, and their resolution is in fact a key step for those positive characteristic oriented constructions that lead to a resolution of singularities in characteristic zero, such as those appearing in \cite{3},\cite{4}. As well known, \'etale-locally the resolution problem for tame abelian quotient singularities is completely understood, i.e. there are explicit algorithms, such as \cite{7}, theorems 11 and 11*, and \cite{8}, III.iii.4.bis, that provide us with a resolution. The issue is therefore the local-to-global step (and this is by no means a new story, \cite{10} 3.14.9.), where the toric machinery is not sufficiently strong without additional global assumptions, for example the existence of a toroidal embedding. Consequently it seems convenient to look for a resolution procedure that is functorial under \'etale localization, following the philosophy of \cite{10}, \cite{8}.

Our main result is:
\begin{mthm}[\ref{main}]
Let $k$ be an algebraically closed field, and $X/k$ a quasi-projective variety with tame quotient singularities, with an \'etale cover $\bigsqcup_j X_j\to X$ such that $\max_{x\in X_j}|G_x|$ is finite for every $j$. Then there exists a resolution functor $X\to (M(X),r_X)$ where $M(X)$ is a smooth, quasi projective variety, and $r_X:M(X)\to X$ is a proper, birational, relatively projective morphism, which is an isomorphism over the smooth locus of $X$. The resolution functor commutes with \'etale base change, that is to say for every \'etale morphism $f:Y\to X$ there is a unique isomorphism $\phi_f:f^*M(X)\to M(Y)$.
\end{mthm} 
Now we can discuss the ideas employed. Following the general pattern, \cite{10}, that the construction of a resolution functor is algorithmic in its own nature, we define an invariant $X\to i(X)\in \mathbf N$, such that $i(X)=0$ if and only if $X$ is smooth. Then we proceed by induction on $i$, by performing birational operations - smooth and weighted blow-ups along smooth centers - that eventually decrease $i$. The proof consists of two steps: the first step \ref{step1} constructs a birational modification whose resulting space has only tame cyclic quotient singularities, and the invariant does not increase. Here the full power of the philosophy ``functoriality in the \'etale topology" appears, and most of the logical difficulties, apart from some routine technical issues, are hidden in the inductive step of the process. This inductive step resolves the singularities of a Deligne-Mumford stack with smaller invariant, and the variety we are looking for is its GC quotient, whose existence as an algebraic space is granted by \cite{6}. At this point the inductive hypotheses show up again and force the GC quotient to be a quasi-projective variety, if we started with such.

The second step \ref{step2} is an algorithm that reduces the invariant if we start with tame cyclic quotient singularities and an extra structure of ``global character for the local geometric stabilizers", more precisely an equivariant divisor, on the Vistoli covering stack, along which the stabilizers act with a faithful character. 

The logic how the two steps fit together and provide a resolution is as follows: starting with a variety with tame quotient singularities, the first step produces a variety with tame cyclic quotient singularities and a marked irreducible divisor, \ref{summ}. Properties of this pair are such that we can employ it as input for our second step, whose output is a variety with tame cyclic quotient singularities whose invariant is strictly smaller than the one we started with.
Functoriality in the \'etale topology implies, as explained for example in \cite{10}, that the resolution procedure applies to algebraic spaces, Deligne Mumford-stacks and analytic spaces without any compactness assumption, \ref{coro}, \ref{corol}, \ref{coroll}.
It is worth remarking that, in the course of the proof, we will encounter singularities created by diagonal actions of group schemes of roots of unity, which are not necessarily tame. This must happen if one wants to tackle the problem using weighted modifications/toric geometry in positive characteristic. It turns out that, in this specific circumstance, Mumford's resolution process for toroidal singularities can be carried over by weakening a bit the assumption on the existence of a toroidal embedding, requiring instead the existence of a divisor along which diagonal cyclic stabilizers act with faithful character, see \ref{quasi} for the precise condition.

Unfortunately, the methods developed here seem to completely lose their efficiency if one drops the tameness assumption, in fact the crucial fact we use profusely is that there exist regular parameters along which tame abelian actions are diagonalized, while of course this never happens with non-trivial $p$-group actions in characteristic $p$. We suspect that a resolution functor that deals with all possible quotient singularities simultaneously must proceed via a completely different strategy.

\vspace{0.5cm}

\noindent\textbf{Acknowledgments}: This problem has been suggested by Fedor Bogomolov. His encouragement, and the enlightening conversations we had, have been of great help. This work could never be completed without the support of Michael McQuillan, whose restless and careful explanations permeate and shape the core ideas building our proof. Grateful thanks are extended to Gabriele Di Cerbo, for reading a first draft of this paper and for his editing, that drastically improved our presentation.
 
\section{Generalities}

\subsection{Tame group actions}\label{tame}
Let $k$ be an algebraically closed field and $G$ be a finite group acting by $k$-automorphism on a noetherian, regular, local $k$-algebra $\mathcal O$. The action is said tame if $(|G|,\cha(k))=1$. If $G$ acts on a smooth algebraic variety by $k$-automorphisms, we say that the action is tame if it is generically free, and the stabilizer at any geometric point acts tamely on the corresponding Zariski local ring. From the point of view of singularities we could request more, i.e. the action to be free in codimension one, by way of the celebrated Chevalley-Shephard-Todd theorem:
\begin{fact}\label{pseudo}
 If $G$ is a finite group acting tamely on a smooth algebraic variety, then for any geometric point $x$ with stabilizer $G_x$, the ring of invariants $\mathcal O_x^{G_x}\subset \mathcal O_x$ is a regular local ring if and only if $G_x$ is generated by \textit{pseudoreflections}, i.e every element $g\in G_x$ fixes pointwise a smooth divisor depending on $g$.
\end{fact}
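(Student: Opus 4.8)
The plan is to reduce the statement to the classical Chevalley--Shephard--Todd theorem for linear actions, the whole point being that the tameness hypothesis \ref{tame} makes every step of the classical proof survive in characteristic $p$.

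First I would linearize. Fix a geometric point $x$ with stabilizer $G_x$, write $\mathcal O=\mathcal O_x$ with maximal ideal $\mathfrak m$, and set $V=(\mathfrak m/\mathfrak m^2)^\vee$ with its induced linear $G_x$-representation. Since $(|G_x|,\cha(k))=1$, the averaging operator $\tfrac{1}{|G_x|}\sum_{g\in G_x}g$ is a well-defined $G_x$-equivariant projection, and applying it to any lift of a basis of $\mathfrak m/\mathfrak m^2$ produces regular parameters $t_1,\dots,t_n$ on which $G_x$ acts through its linear representation on $V$. This yields a $G_x$-equivariant isomorphism $\widehat{\mathcal O}\cong k[[V]]$, hence $\widehat{\mathcal O}^{\,G_x}\cong k[[V]]^{G_x}$, the completion of $\Sym(V^\vee)^{G_x}$. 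As regularity of $\mathcal O^{G_x}$ is equivalent to regularity of its completion, the problem becomes the linear one: $\Sym(V^\vee)^{G_x}$ is polynomial iff $G_x$ is generated by pseudoreflections. Tameness enters once more to match the two notions of pseudoreflection: a tame $g\in GL(V)$ is diagonalizable, its fixed locus in $\Spec\mathcal O$ is smooth and corresponds to its linear fixed space on $V$, so $g$ fixes a divisor precisely when $g$ is a pseudoreflection in the linear-algebra sense.

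Next I would treat the direction ``invariants regular $\Rightarrow$ generated by pseudoreflections'' by ramification theory. Let $G_0\trianglelefteq G_x$ be the normal subgroup generated by all pseudoreflections. The finite morphism $\Spec k[V]\to \Spec k[V]^{G_x}$ is generically $G_x$-Galois, and by tameness its codimension-one ramification occurs exactly along the fixed hyperplanes of pseudoreflections. Consequently the intermediate cover $\Spec k[V]^{G_0}\to \Spec k[V]^{G_x}$ is unramified in codimension one. Assuming $k[V]^{G_x}$ regular, Zariski--Nagata purity of the branch locus forces this cover to be \'etale; but it is a quotient by $G_x/G_0$ fixing the origin, so $G_x/G_0$ must be trivial, i.e. $G_x=G_0$.

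Finally, the converse --- generated by pseudoreflections $\Rightarrow$ polynomial invariants --- is the computational heart and the step I expect to be the main obstacle. I would prove that $k[V]$ is a free module of rank $|G_0|$ over $S:=k[V]^{G_0}$, and that $S$ is minimally generated by exactly $n=\dim V$ homogeneous elements whose degrees multiply to $|G_0|$; the freeness together with this Hilbert-series bookkeeping forces $S$ to be a polynomial ring. The essential remark is that every ingredient of the classical argument --- the Reynolds operator, semisimplicity of $G_0$-representations, Molien's series, and nondegeneracy of the trace pairing --- is available verbatim because $|G_0|$ divides $|G_x|$ and is therefore invertible in $k$. The real labour is thus not in inventing new arguments but in auditing each classical lemma to confirm that it uses only the invertibility of the group order, which tameness guarantees.
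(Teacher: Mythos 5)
A preliminary point: the paper offers no proof of \ref{pseudo} at all --- it is stated as a Fact and explicitly attributed to the celebrated Chevalley--Shephard--Todd theorem, so your write-up must be measured against the classical literature rather than against any argument in the text. On that standard, your proposal is essentially the standard nonmodular proof, and its main steps are correct. The linearization step is exactly the averaging device the paper itself deploys for cyclic groups in \ref{tame} and again in \ref{fixed}, and your dictionary between the paper's geometric pseudoreflections (pointwise-fixed smooth divisor) and linear ones (fixed hyperplane in $V$) is the right use of tameness, since a tame automorphism is diagonalizable with smooth fixed locus. Your purity argument for ``invariants regular $\Rightarrow$ generated by pseudoreflections'' is the classical one --- and, as it happens, the very same appeal to Zariski--Nagata purity (\cite{5}, X.3.4) that the paper makes later when constructing the Vistoli covering stack; to close it you should make explicit that $G_0$ is normal and contains every codimension-one inertia group of $k[V]$ over $k[V]^{G_x}$, that the fiber of $\Spec k[V]^{G_0}\to \Spec k[V]^{G_x}$ over the image of the origin is a single point because the origin is fixed by all of $G_x$, and that a finite \'etale cover of a connected base with a singleton fiber has degree one, forcing $G_x=G_0$. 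For the converse, your audit claim is accurate --- Chevalley's argument uses only the Reynolds operator, hence only invertibility of $|G_0|$ in $k$, which is precisely why the paper can quote the theorem under the tameness hypothesis --- but your proposed ordering is slightly backwards: in the graded nonmodular setting, freeness of $k[V]$ over $S=k[V]^{G_0}$ is \emph{equivalent} to polynomiality of $S$ (via Auslander--Buchsbaum it presupposes the regularity you are trying to prove), so taking freeness as the first input risks circularity. The genuine content is Chevalley's induction on degree showing that $S$ is minimally generated by $n=\dim V$ algebraically independent homogeneous invariants; freeness and the degree identity $\prod_i d_i=|G_0|$ are then consequences, not inputs. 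With that reordering, your proposal is a complete and faithful reconstruction of the theorem the paper cites.
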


Tame actions enjoy some remarkable properties, with respect to the local algebra they are the simplest possible. We can recollect those properties that we need in the following simple propositions:

\begin{proposition}
Let $G$ be a cyclic group, generated by $g\in G$, acting tamely on the regular local $k$-algebra $\mathcal O$ with maximal ideal $m$. Then there are elements $\zeta_1,...,\zeta_n\in k^*$ and a regular system of parameters $x_1,...,x_n$ generating $m$ such that $x_i^g=\zeta_ix_i$.
\end{proposition}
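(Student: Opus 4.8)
The plan is to reduce the statement to a diagonalization problem on the cotangent space $m/m^2$ and then to promote an eigenbasis there to a genuine system of eigen-parameters by averaging. Write $N=|G|$ for the order of $g$, so that tameness means $(N,\cha(k))=1$. Since $g$ is a $k$-algebra automorphism fixing $m$, it acts $k$-linearly on the finite dimensional vector space $V=m/m^2$, and on $V$ it satisfies $g^N=\mathrm{id}$. As $N$ is invertible in $k$ and $k$ is algebraically closed, the polynomial $T^N-1=\prod_{\zeta^N=1}(T-\zeta)$ splits into distinct linear factors; hence the minimal polynomial of $g$ on $V$ is squarefree and $g$ is diagonalizable. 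I would therefore fix a basis $\bar x_1,\dots,\bar x_n$ of $V$ consisting of eigenvectors, with $\bar x_i^{\,g}=\zeta_i\bar x_i$ for suitable $N$-th roots of unity $\zeta_i\in k^*$.

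Next I would lift these eigenvectors to $m$ without destroying the eigenvalue relation. Choose arbitrary lifts $y_i\in m$ of $\bar x_i$, and apply the Reynolds projector onto the $\zeta_i$-eigenspace,
\[
x_i \;=\; \frac{1}{N}\sum_{j=0}^{N-1}\zeta_i^{-j}\,y_i^{\,g^j}.
\]
A direct reindexing, using $g^N=\mathrm{id}$ and $\zeta_i^N=1$, gives $x_i^{\,g}=\zeta_i x_i$, so each $x_i$ is an honest eigenvector. It remains to see that $x_1,\dots,x_n$ form a regular system of parameters. Because $g$ preserves $m^2$, the operator above descends to $V$ and there coincides with the linear projection onto the $\zeta_i$-eigenspace; since $\bar y_i=\bar x_i$ already lies in that eigenspace, the class of $x_i$ in $V$ is exactly $\bar x_i$. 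Thus $x_1,\dots,x_n$ reduce to a basis of $m/m^2$, so by Nakayama's lemma they generate $m$, and as $\mathcal O$ is regular of dimension $n=\dim_k V$ they form a regular system of parameters.

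The only delicate point is the interaction between the two requirements on the $x_i$: a naive lift of the eigenbasis is automatically a regular system of parameters but need not be fixed by $g$ up to scalar, while forcing the eigenvalue relation could in principle spoil the property of being a parameter system. The averaging operator resolves this tension precisely because tameness makes $\tfrac1N$ available, so that the projector is idempotent and, being built from the ring automorphisms $g^j$, is compatible with the $m$-adic filtration; this compatibility is what guarantees that $x_i$ and $y_i$ have the same image in $m/m^2$. I expect this compatibility check to be the main thing to verify carefully, everything else being formal.
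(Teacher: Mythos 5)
Your proposal is correct and follows essentially the same route as the paper: diagonalize the induced action on $m/m^2$ (possible by tameness and $k$ algebraically closed), then apply the averaging operator $x_i=\frac{1}{|G|}\sum_{j}\zeta_i^{-j}y_i^{g^j}$ to lifts, noting that the average agrees with the lift modulo $m^2$ so the result is still a regular system of parameters. Your extra care about why $g$ is diagonalizable on $m/m^2$ and why the projector is compatible with the $m$-adic filtration only makes explicit what the paper leaves implicit.
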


\begin{proof}
As $g$ acts on the $k$-vector space $m/m^2$, by assumption we can certainly find parameters and eigenvalues satisfying the identities $x_i^g=\zeta_ix_i$ mod $m^2$. We can replace each $x_i$ by $y_i=|G|^{-1}\sum_{k=0}^{k=|G|-1}x_i^{g^k}\zeta_i^{-k}$, and observe that $y_i=x_i$ mod $m^2$, and $y_i$ satisfies the required relation.
\end{proof}

\begin{corollary}\label{fixed}
If $G$ acts tamely on the smooth algebraic variety $X$ then the subvariety $X_G$ of points fixed by $G$ is smooth.
\end{corollary}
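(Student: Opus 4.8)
The plan is to reduce the statement to a local computation at a fixed point and to linearize the action there. Since smoothness is \'etale-local, fix a geometric point $x\in X_G$; because every element of $G$ fixes $x$ we have $G=G_x$, so by hypothesis $G$ acts tamely on the regular local ring $\mathcal O=\mathcal O_x$ with maximal ideal $m$, and in particular $|G|$ is invertible in $k$. The scheme-theoretic fixed locus $X_G$ is cut out near $x$ by the ideal $I$ generated by all differences $g\cdot f-f$, with $g\in G$ and $f\in\mathcal O$, and it suffices to show that $I$ is generated by part of a regular system of parameters, as this forces $X_G$ to be regular at $x$.

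First I would upgrade the eigenvector construction of the preceding Proposition from the cyclic to the general tame case, in the weaker form of a linearization. Exactly as in that proof, tameness lets me average: starting from any $k$-linear splitting $s_0:m/m^2\to m$ of the projection, the $G$-equivariant average $s(\bar v)=|G|^{-1}\sum_{g\in G}g\cdot s_0(g^{-1}\bar v)$ is again a splitting, and its image $L=s(m/m^2)\subset m$ is a $G$-stable $k$-subspace mapping $G$-equivariantly and isomorphically onto $m/m^2$. Writing $x_1,\dots,x_n$ for a basis of $L$, this is a regular system of parameters on which $G$ acts by $k$-linear substitutions, that is $g\cdot x_i=\sum_j a_{ji}(g)\,x_j$ with no higher order terms.

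Next I would use Maschke's theorem, available since the action is tame, to split the representation $m/m^2=(m/m^2)^G\oplus W$ into its trivial isotypic part and a $G$-stable complement containing no trivial summand, and transport this through $s$ to a decomposition $L=L^G\oplus L'$. Choosing the basis so that $x_1,\dots,x_r$ span $L^G$ and $x_{r+1},\dots,x_n$ span $L'$, I claim $I=(x_{r+1},\dots,x_n)$. The inclusion $I\subseteq(x_{r+1},\dots,x_n)$ holds because the ideal $(x_{r+1},\dots,x_n)$ is $G$-stable, $L'$ being $G$-stable, and the induced $G$-action on the quotient $\mathcal O/(x_{r+1},\dots,x_n)$ is trivial, $x_1,\dots,x_r$ being fixed; hence $g\cdot f-f$ lands in this ideal for every $f$. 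The reverse inclusion holds because $L'$ carries no trivial summand, so the operators $g-1$ already span $L'$ inside $L$, placing each $x_j$ with $j>r$ into $I$. With $I=(x_{r+1},\dots,x_n)$ the fixed locus is locally the vanishing of part of a regular system of parameters, hence smooth of dimension $r$, which proves the corollary.

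I expect the linearization step to be the main obstacle: one must verify that the averaged section is genuinely a splitting and that it eliminates all higher-order terms in the $G$-action, which is precisely where tameness is indispensable, the same failure of averaging for $p$-groups in characteristic $p$ that is flagged in the introduction. The remaining bookkeeping, in particular the two inclusions identifying $I$ with the coordinate ideal, is then routine linear representation theory over $k$.
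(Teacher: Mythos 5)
Your proof is correct and takes essentially the same route as the paper's: linearize the tame action at a fixed point by averaging, then observe that the ideal of the fixed locus is generated by linear forms in the resulting parameters, hence by part of a regular system of parameters. Your rendering averages the $k$-linear section of $m\to m/m^2$ inside the local ring rather than an isomorphism $\hat{\mathcal O}\to k[[x_1,\dots,x_n]]$, and uses Maschke's theorem to make the paper's ``easily seen'' identification of the fixed ideal explicit --- a slightly more careful implementation of the same averaging argument.
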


\begin{proof}
Let $\mathcal O$ be the local ring at a geometric point $x$ fixed by $G$, then (essentially) as in the previous proof we can average any isomorphism $\hat{\mathcal O}\to k[[x_1,...,x_n]]$ in order to make it equivariant with respect to the induced action of $G$ on the power series ring. Here the action is by linear automorphism, whence the ideal defining $X_G$ is easily seen to be generated by the linear forms $(x_i^g-x_i)_{g\in G}$, and consequently the subvariety $X_G$ is smooth at $x$.
\end{proof}

\begin{proposition}\label{henselian}
Let $\mathcal O$ be a regular local strictly henselian $k$-algebra with maximal ideal $m$, $G$ a finite group acting tamely on it. If $I$ is a principal ideal which is fixed by $G$ then there exists a character $\chi :G\to k^*$ and a generator $f$ of $I$, such that $f^g=\chi (g) f$ for any $g\in G$.
\end{proposition}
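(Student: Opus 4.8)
The plan is to turn the $G$-invariance of $I$ into a cocycle condition on units and then to trivialize that cocycle by the same averaging device used in the two preceding proofs, the tameness hypothesis being precisely what makes the average invertible.

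First I would fix any generator $f$ of $I$. For each $g\in G$ the invariance $g(I)=I$ forces $f^g$ to be another generator, hence $f^g=u_g f$ for a unique unit $u_g\in\mathcal O^*$. Expanding $f^{gh}=(f^g)^h$ and comparing generators yields the twisted cocycle identity $u_{gh}=u_g^h\,u_h$ in the abelian group $\mathcal O^*$. Since replacing $f$ by $bf$ for a unit $b$ replaces $u_g$ by $u_g\,b^g b^{-1}$, the whole problem is to choose $b$ making the new cocycle constant, i.e. valued in $k^*$.

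Next I would separate the residual part. Since $\mathcal O$ is strictly henselian with residue field $k$ and $G$ acts on $\mathcal O$ by $k$-automorphisms, $G$ acts trivially on $k$, so reducing $u_{gh}=u_g^h u_h$ modulo $m$ shows that $\chi(g):=\bar u_g\in k^*$ is an honest character $G\to k^*$. Setting $w_g:=u_g\,\chi(g)^{-1}$ then gives a cocycle satisfying the same twisted relation $w_{gh}=w_g^h w_h$, but now with $w_g\in 1+m$, i.e. $\bar w_g=1$. The point of peeling off $\chi$ first is exactly that $\sum_h\chi(h)$ may vanish, whereas $\sum_h w_h$ will not.

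Finally I would average: put $b=|G|^{-1}\sum_{h\in G} w_h$. Here tameness enters, as $|G|$ is invertible in $k\subset\mathcal O$; moreover $\bar b=|G|^{-1}\sum_h\bar w_h=1$, so $b\in 1+m$ is a unit. Using the cocycle relation in the form $w_h^g=w_{hg}\,w_g^{-1}$ and reindexing the sum by $h\mapsto hg$ gives $b^g=w_g^{-1}b$, that is $w_g=b\,(b^g)^{-1}$. Then $f':=bf$ is again a generator of $I$ and satisfies $(f')^g=\chi(g)f'$, as required. The only genuine obstacle is the invertibility of the averaged element $b$: this is where tameness is indispensable, and it is guaranteed only because we first removed the character $\chi$, so that $b$ reduces to $1$ rather than to the possibly vanishing $|G|^{-1}\sum_h\chi(h)$.
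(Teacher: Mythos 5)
Your proof is correct, and it shares the paper's skeleton: both split the unit cocycle $u_g$ along the exact sequence $1\to(1+m)\to\mathcal O^*\to k^*\to 1$, first extracting the residual character $\chi$ and then trivializing the remaining $(1+m)$-valued part. Where you genuinely diverge is in how the trivialization is done. The paper argues cohomologically: strict henselianness plus tameness make $1+m$ uniquely $|G|$-divisible, whence $H^1(G,1+m)=H^2(G,1+m)=0$ and $H^1(G,\mathcal O^*)\simeq H^1(G,k^*)=\Hom(G,k^*)$. You instead exhibit the coboundary explicitly via the additive average $b=|G|^{-1}\sum_h w_h$, verify $b^g=w_g^{-1}b$ by reindexing the twisted cocycle relation, and get invertibility of $b$ from $\bar b=1$; this is in effect a hands-on proof of the vanishing $H^1(G,1+m)=0$, in the style of Hilbert 90 with the auxiliary element taken to be $1$ --- which is legitimate precisely because you normalized $\bar w_h=1$ beforehand, and your closing remark that peeling off $\chi$ first is what saves the average from the possibly vanishing Gauss sum $\sum_h\chi(h)$ identifies exactly the right subtlety. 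Your route buys self-containedness and, in fact, slightly more generality: the averaging never uses the henselian hypothesis (which the paper needs for $|G|$-divisibility of $1+m$), only that $\mathcal O$ is local with residue field $k$, that $G$ acts by $k$-automorphisms (so $\chi(g)^h=\chi(g)$, making $w$ again a twisted cocycle), and that $|G|$ is invertible in $k$. The paper's route buys brevity and a reusable identification of $H^1(G,\mathcal O^*)$ with $\Hom(G,k^*)$.
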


\begin{proof}
Let $f$ be a generator of $I$. By assumption, for every $g$ there is $u_g\in \mathcal O^*$ such that $f^g=u_gf$. It follows that $f^{gh}=u_g^hu_hf$, whence the set map $G\to \mathcal O^*$ given by $g\to u_g$ is a 1-cocycle representing a class in $H^1(G,\mathcal O^*)$. Consider the exact sequence $0\to (1+m)\to \mathcal O^*\to k^*\to 0$ of multiplicative groups. By assumption on $\mathcal O$ being strictly henselian, since $|G|$ is coprime to $\cha(k)$ we see that $1+m$ is $|G|$-divisible, whence $H^1(G,1+m)=H^2(G,1+m)=0$. Consequently we have a natural isomorphism $H^1(G,\mathcal O^*)\to H^1(G,k^*)$. Moreover since the $G$-action on the residue field $k$ is trivial, we get $H^1(G,k^*)=\Hom (G,k^*)$, whence, upon replacing $f$ by $uf$, for some invertible $u$, the map $g\to u_g$ is a character of the group $G$, and of course $uf$ is the required generator.
\end{proof}

\begin{proposition}\label{faithful}
Let $q:\mathcal O\to \mathcal O'$ be an injective morphism of rings, equivariant by the action of a cyclic group $C$. Assume $C$ acts with faithful character on $f\in \mathcal O$, then $C$ acts with faithful character on $q(f)$.
\end{proposition}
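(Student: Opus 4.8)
The plan is to transport the faithful character directly across $q$, using nothing more than equivariance together with injectivity. By hypothesis there is a \emph{faithful} character $\chi : C \to k^*$ with $f^g = \chi(g) f$ for every $g \in C$, and I claim that the very same $\chi$ exhibits the faithful character action on $q(f)$.

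First I would record the computation that equivariance forces. Regarding $q$ as the morphism of $k$-algebras it is in context, it is in particular $k$-linear and fixes the scalars $\chi(g) \in k^*$, so for each $g \in C$,
\[
q(f)^g \;=\; q\bigl(f^g\bigr) \;=\; q\bigl(\chi(g)\, f\bigr) \;=\; \chi(g)\, q(f),
\]
the first equality being the $C$-equivariance of $q$ and the last its $k$-linearity. Thus $C$ acts on $q(f)$ through the character $\chi$, which is faithful by assumption.

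The only point requiring care — and the sole place the injectivity hypothesis enters — is to exclude the degenerate case $q(f) = 0$, in which the displayed relation would hold for \emph{every} character and the conclusion would be vacuous. But the hypothesis that $C$ acts with a faithful character on $f$ presupposes $f \neq 0$, and injectivity of $q$ then yields $q(f) \neq 0$; hence $\chi$ is genuinely the character through which $C$ acts on $q(f)$. I expect no real obstacle beyond this bookkeeping: the argument is formal, and the content of the statement is precisely that injectivity is what prevents the character datum from collapsing when passing to $q(f)$.
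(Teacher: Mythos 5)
Your proposal is correct and takes essentially the same approach as the paper: both proofs consist of the single identity $q(f)^g = q(f^g) = \chi(g)\,q(f)$ supplied by equivariance, with injectivity of $q$ invoked exactly once to rule out degeneration. The only cosmetic difference is organizational — the paper argues contrapositively, taking the minimal $s$ with $\chi^s q(f) = q(f)$ and applying injectivity to $\chi^s f - f$ to force $s = |C|$, whereas you apply injectivity to $f$ itself to get $q(f) \neq 0$ and then transport the same faithful $\chi$; the two versions are interchangeable.
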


\begin{proof}
Let $s$ be the minimal integer such that $\chi^sq(f)=q(f)$, then $q(\chi^sf-f)=0$, and since $q$ is injective we deduce $s=|C|$, that is the action is faithful.
\end{proof}

\subsection{Stacks with quotient singularities} 
Let $k$ be an algebraically closed field. An algebraic variety $X/k$ has analytic quotient singularities if it is normal and for any geometric point $x$, there exist a finite group $G_x$ acting on a smooth complete local $k$-algebra $k[[x_1,...,x_n]]$, an inclusion $i_x:\hat{\mathcal O}_{x}\hookrightarrow k[[x_1,...,x_n]]$ such that $i_x(\hat{\mathcal O}_{x})=k[[x_1,...,x_n]]^{G_x}$. By Artin approximation this is equivalent to the fact that $X$ admits an \'etale cover $\bigsqcup V_i\to X$, where $V_i=U_i/G_i$ for smooth varieties $U_i$ and finite groups $G_i$, whose action on $U_i$ is generically free. $X$ has tame quotient singularities if each $G_i$ acts on $U_i$ tamely. Similarly $X$ has algebraic quotient singularities if it admits such an open cover in the Zariski topology.

\begin{definition}
For a closed point $x\in X$, the group $G_x$ is called the geometric stabilizer at $x$.
\end{definition}

A Deligne-Mumford stack $\mathcal X$ has tame quotient singularities if for some \'etale atlas (whence for all) $U\to \mathcal X$ the algebraic variety $U$ has tame quotient singularities.

Before getting into the properties of DM stack with tame quotient singularities, we recall an extremely useful local description of DM stacks:

\begin{fact}[\cite{9} 2.8]
Let $\mathcal X$ be a DM stack over any field, then it admits an \'etale cover by classifying stacks, i.e. stacks of the form $[Y/G]$ with $G$ a finite group.
\end{fact}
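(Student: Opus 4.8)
The plan is to reduce the statement to a purely local construction: it suffices to produce, for each geometric point $x$ of $\mathcal X$ with automorphism group $G=\mathrm{Aut}(x)$ (necessarily finite, since $\mathcal X$ is Deligne--Mumford), an \'etale morphism $[U/G]\to\mathcal X$ hitting $x$; the disjoint union of such charts, taken over a set of points meeting every point of $\mathcal X$, then furnishes the desired \'etale cover by classifying stacks. First I would fix such an $x$, choose an \'etale atlas $p:V\to\mathcal X$ with $V$ an affine scheme, and lift $x$ to a point $v\in V$. The object is to exhibit $V$, after shrinking around $v$, as carrying a $G$-action that realizes $\mathcal X$ \'etale-locally.

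The engine is the \'etale groupoid $R=V\times_{\mathcal X}V\rightrightarrows V$ presenting $\mathcal X$, with source and target maps $s,t$, both \'etale. The geometric fibre of $(s,t):R\to V\times V$ over $(v,v)$ parametrizes automorphisms of the object $x$, hence is exactly the finite constant set $G=\mathrm{Aut}(x)$. Since $s$ is \'etale, each element $g\in G$, viewed as a point of $R$ lying over $v$ via $s$, extends to a \emph{unique} section $\sigma_g$ of $s$ through $g$, defined on an \'etale neighbourhood of $v$; composing with $t$ then yields an \'etale self-map $t\circ\sigma_g$ of a neighbourhood of $v$ fixing $v$. Shrinking $V$ to a common neighbourhood $U$ on which all $|G|$ sections are defined, I would declare the action of $g$ on $U$ to be $t\circ\sigma_g$.

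The heart of the argument is to verify that $g\mapsto t\circ\sigma_g$ is a genuine $G$-action and that it recovers $\mathcal X$. Associativity follows from the uniqueness of sections of the \'etale map $s$: the groupoid composite $\sigma_g\cdot\sigma_h$ and the section $\sigma_{gh}$ both pass through the point $gh$ of the fibre over $v$, hence coincide on a neighbourhood of $v$. The induced map $G\times U\to R|_U$, $(g,u)\mapsto\sigma_g(u)$, is then \'etale and restricts on the fibre over $v$ to the bijection $G\xrightarrow{\sim}R_v$; this should yield an isomorphism $[U/G]\xrightarrow{\sim}$ (the restriction of $\mathcal X$) together with an \'etale morphism $[U/G]\to\mathcal X$. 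The step I expect to be the main obstacle is exactly controlling that, after shrinking, $R|_U$ contains \emph{no arrows beyond} the graphs of the $\sigma_g$, i.e. that the transformation groupoid $G\ltimes U$ exhausts $R|_U$. This is where the Deligne--Mumford hypothesis does the real work: unramifiedness of the diagonal forces $R\to V\times V$ to be unramified, so the sections $\sigma_g$ are open immersions near the fibre over $v$, and since that fibre has precisely $|G|$ points, all accounted for, a further shrinking of $U$ eliminates any spurious arrows. Once this is settled, collecting the resulting charts $[U/G]$ completes the proof.
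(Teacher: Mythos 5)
Your outline correctly identifies the shape of the local structure (arrows near $v$ should be the graphs of a $G$-action), but the step you yourself flag as the main obstacle --- ``a further shrinking of $U$ eliminates any spurious arrows'' --- is a genuine gap, and the justification you offer does not close it. Unramifiedness of the diagonal does make the graphs $\sigma_g(U)$ open in $R|_U$, but openness says nothing about the complement: the spurious locus $R'=R|_U\setminus\bigcup_g\sigma_g(U)$ is closed in $R|_U$ and misses the fibre over $(v,v)$, and to remove it you would need an open $U'\ni v$ with $(U'\times U')\cap (s,t)(R')=\emptyset$. Opens of the form $U'\times U'$ are not cofinal among neighbourhoods of $(v,v)$ in $V\times V$, so this fails even in the simplest examples: take $\mathcal X=[\mathbf A^1/\mu_2]$ with the sign action in characteristic $\neq 2$, $V=\mathbf A^1$, $v=1$, so $G=\mathrm{Aut}(x)$ is trivial; here $R$ has a component mapping to $V\times V$ by $u\mapsto (u,-u)$, and every Zariski open $U\ni 1$ is cofinite, hence contains some pair $\{u,-u\}$, so $R|_U$ always contains spurious arrows. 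No Zariski shrinking works, and your argument produces no specific \'etale one. Worse, if $(s,t)$ is not finite (i.e.\ without finite inertia, which the statement implicitly assumes since the paper invokes the Keel--Mori quotient), components of $R'$ can accumulate at $(v,v)$ without containing a point over it, and $(s,t)(R')$ need not even be closed, so counting the $|G|$ points of the fibre over $(v,v)$ controls nothing.

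What your proposal is missing is exactly the device the paper uses: shrink over the coarse space rather than on the atlas. The paper forms the GC quotient $\mathcal X\to X$ (Keel--Mori), base changes to the strict henselization $\Spec(\mathcal O_x)$ of $X$ at the image of $v$, and chooses the atlas $Y=\Spec(S)$ with $S$ strictly henselian local. Then both factors of $R=Y\times_{\mathcal X_x}Y$ live finitely over the single henselian local ring $\mathcal O_x$, so $R$ is finite \'etale over the strictly henselian $Y$ and therefore splits as a finite disjoint union of copies of $Y$; the group is read off as $G=\pi_0(R)$, with multiplication induced by the groupoid composition, and $[Y/G]\to\mathcal X$ is the desired \'etale chart. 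In that setting your sections $\sigma_g$ reappear as the connected components of $R$, automatically exhausting all arrows --- the henselization over the coarse space is what makes the neighbourhood of $v$ simultaneously stable under all arrows, which is precisely what no shrinking of $U$ inside $V$ alone can guarantee.
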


\begin{proof}
By \cite{6} 1.1, $\mathcal X$ admits a GC quotient $\mathcal X\to X$. Let $x$ be a geometric point in $X$, with \'etale local ring $\mathcal O_x$, and $U\to \mathcal X$ an \'etale atlas. Consider the following diagram with fibered squares:
$$\begin{CD}
U@<<<\Spec(S)=Y\\
@VVV @VVV \\
\mathcal X@<<<\mathcal X_x\\
@VVV @VVV\\
X@<<<\Spec(\mathcal O_x)
\end{CD}$$ \\ 
where of course $S$ is a strictly henselian local ring and $Y=\Spec (S)\to \mathcal X_x$ is an \'etale atlas.
As the residue field of $S$ is separably closed, $R=Y\times_{\mathcal X_x}Y$ must be a finite disjoint union of copies of $Y$, whence the set $G$ of connected components of $R$ inherits canonically a group structure if we declare that $s$ is the natural projection. Thus $(s,t):R=Y\times G\rightrightarrows Y$is a groupoid with an \'etale morphism $[Y/G]\to \mathcal X$.
\end{proof}

In the situation where $\mathcal X$ is a DM stack with tame quotient singularities there exists a canonical, smooth cover of $\mathcal X$, which we will refer to as \textit{Vistoli covering stack}, constructed as follows:

\begin{fact}[\cite{9} 2.8] 
Let $\mathcal X$ be a DM stack with tame quotient singularities. There exists a smooth DM stack $\mathcal X^V$ with a morphism $\mathcal X^V\to \mathcal X$ which is \'etale in codimension one, satisfying the following universal property: if $\mathcal Y$ is a smooth DM stack and $\mathcal Y\to \mathcal X$ is \'etale in codimension one, then there exists a unique factorization $\mathcal Y\to \mathcal X^V\to \mathcal X$.
\end{fact}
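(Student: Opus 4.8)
The plan is to construct $\mathcal X^V$ by gluing étale-local models, each obtained by \emph{killing pseudoreflections}, and to harvest the gluing data from the universal property itself, which I first establish for a single local model. Throughout, tameness is exactly what makes the two decisive tools available: the Chevalley--Shephard--Todd theorem \ref{pseudo} (to produce a smooth quotient after removing pseudoreflections) and the Zariski--Nagata purity/Abhyankar machinery controlling tamely ramified covers across codimension one.

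For the local model, I use the local structure of a DM stack as an étale quotient $[Y/G]$ together with the definition of tame quotient singularities applied to the atlas $Y$; after Artin approximation I may assume that, étale-locally on the coarse space $X$ of $\mathcal X$, one has $\mathcal X\cong[Z/G]$ with $Z$ the \emph{scheme} quotient $Z=M/H$ of a smooth affine $M$ by a finite group $H$ acting tamely and generically freely. Let $H_0\trianglelefteq H$ be generated by the pseudoreflections; since a conjugate of a pseudoreflection is again one, $H_0$ is normal, and by \ref{pseudo} the quotient $M_0:=M/H_0$ is smooth while $\bar H:=H/H_0$ acts on $M_0$ generically freely and \emph{without} pseudoreflections. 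The tame $G$-action on $Z$ lifts to the cover $M_0$ up to the deck group $\bar H$ — the obstructions being tame group-cohomology classes that vanish, much as in \ref{henselian} — giving a finite extension $1\to\bar H\to\Gamma\to G\to1$ acting on $M_0$. I then set $\mathcal X^V_{\mathrm{loc}}:=[M_0/\Gamma]$. It is smooth because $M_0$ is, and the morphism $[M_0/\Gamma]\to[Z/G]=\mathcal X$ induced by $M_0\to M_0/\bar H=Z$ is an isomorphism over the complement of the branch locus of $M_0\to Z$, which has codimension $\geq2$ precisely because $\bar H$ carries no pseudoreflections; in particular it is étale in codimension one.

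The crux is the universal property. Let $\mathcal Y\to\mathcal X$ be any smooth DM stack, étale in codimension one. Form the fibre product $P:=\mathcal Y\times_{\mathcal X}M_0$ and let $\tilde P$ be its normalization; it inherits the $\Gamma$-action from $M_0$, and over the open $\mathcal X^0\subseteq\mathcal X$ where $M_0\to\mathcal X$ is étale — its complement of codimension $\geq2$, pseudoreflections having been killed — it restricts to an honest $\Gamma$-torsor over $\mathcal Y$. Thus $\tilde P\to\mathcal Y$ is finite and étale in codimension one with $\mathcal Y$ regular, so by Zariski--Nagata purity its branch divisor, of pure codimension one when nonempty, must be empty: $\tilde P\to\mathcal Y$ is a $\Gamma$-torsor, and the equivariant map $\tilde P\to M_0$ over $\mathcal X$ descends to the desired factorization $\mathcal Y=[\tilde P/\Gamma]\to[M_0/\Gamma]=\mathcal X^V_{\mathrm{loc}}$. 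Uniqueness is immediate, as any two factorizations agree over the dense $\mathcal X^0$ and hence everywhere by normality and separatedness. Here tameness is indispensable, keeping the covers tamely ramified so that purity and Abhyankar's lemma apply.

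Gluing is then formal: applying this universal property to the local models over overlaps of two étale charts yields canonical, mutually inverse comparison morphisms, hence canonical isomorphisms satisfying the cocycle condition by uniqueness, so the $\mathcal X^V_{\mathrm{loc}}$ descend to a global smooth DM stack $\mathcal X^V\to\mathcal X$, étale in codimension one, whose GC quotient is $X$ by \cite{6}, and the local universal property globalizes verbatim. I expect the real obstacle to lie in the local universal-property step — setting up the fibre product, normalization and purity argument $\Gamma$-equivariantly and checking independence of the chosen presentation $Z=M/H$ — whereas the production of the local model is Chevalley--Shephard--Todd plus the cohomological lifting modelled on \ref{henselian}, and the globalization is pure descent.
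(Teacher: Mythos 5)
Your proposal is correct in substance but takes a genuinely different route from the paper's. The paper never works with local quotient presentations or group extensions: it fixes one atlas $W=\bigsqcup U_i/G_i$, sets $W^V=\bigsqcup U_i/H_i$ (with $H_i\trianglelefteq G_i$ generated by pseudoreflections, so $W^V$ is smooth by \ref{pseudo}), and defines the groupoid relation in one stroke as $R^V=(W^V\times_{\mathcal X}W^V)^{\norm}$; since each $G_i/H_i$ acts freely in codimension one, the projections $R^V\rightrightarrows W^V$ are finite and \'etale in codimension one, and Zariski--Nagata purity makes them everywhere \'etale, so $\mathcal X^V=[W^V/R^V]$ is a smooth DM stack --- gluing and equivariance are entirely subsumed in the groupoid, and no lifting of group actions is ever needed. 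You instead build local models $[M_0/\Gamma]$, which forces you to produce the extension $1\to\bar H\to\Gamma\to G\to 1$; your justification of this lift (``tame group-cohomology classes that vanish, much as in \ref{henselian}'') is the one under-argued step, since \ref{henselian} computes $H^1(G,\mathcal O^*)$ and does not govern lifting automorphisms along ramified covers --- but it is repairable with the very purity argument you deploy later: normalize $M_0\times_{Z,\,g\pi}M_0$, apply purity over the smooth strictly henselian $M_0$ to split it into copies of $M_0$, and take $\Gamma$ to be the group of all lifts so obtained, which is automatically an extension of $G$ by the deck group $\bar H$, with no cohomology and no splitting claimed. What your route buys is an explicit verification of the universal property (the normalized fibre product $\tilde P$, purity over the smooth $\mathcal Y$, and torsor descent $\mathcal Y=[\tilde P/\Gamma]$), which the paper leaves implicit in the phrase ``satisfying the required properties''; in particular your codimension bookkeeping --- the bad locus of $M_0\to\mathcal X$ has codimension $\geq 2$ and pulls back to codimension $\geq 2$ in $\mathcal Y$ because $\mathcal Y\to\mathcal X$ is \'etale at all points of codimension $\leq 1$ and \'etale maps preserve codimension --- is exactly the check the paper omits. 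What the paper's route buys is economy: one global groupoid instead of local models plus 2-descent, avoiding both the construction of $\Gamma$ and the gluing-cocycle verification, which in your version also carries the usual 2-categorical caveat that uniqueness of the factorization must be understood as uniqueness up to unique 2-isomorphism for the descent datum to close.
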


\begin{proof}
Let $W\to \mathcal X$ be an \'etale atlas with tame quotient singularities, say $W=\bigsqcup U_i/G_i$ and let $(s,t): R_{\mathcal X}=W\times_{\mathcal X}W\rightrightarrows W$. Denote by $H_i$ the normal subgroup of $G_i$ generated by pseudo-reflections, let $W^V=\bigsqcup U_i/H_i$ and $R^V$ be the normalization of $W^V\times_{\mathcal X} W^V$ \\ 
$$\begin{CD}
U_i/H_i@<<< s^* (U_i/H_i)  @<<< U_i/H_i\times_{\mathcal X}U_j/H_j @<\text{normalization}<< R^V\\
@VVV @VVV @VVV \\
U_i/G_i @<\text{s}<< R_{\mathcal X} @<<< t^*(U_j/H_j)\\
@VVV @V\text{t}VV @VVV \\
\mathcal X@<<< U_j/G_j@<<< U_j/H_j
\end{CD}$$ \\
$R^V$ is a normal algebraic variety since the diagonal of $\mathcal X$ is representable. The  two projections $(s^V,t^V):R^V\rightrightarrows W^V$ are finite and \'etale in codimension one, since the action of $G_i/H_i$ on $U_i/H_i$ is free in codimension one for every $i$. $W^V$ is smooth by \ref{pseudo}, whence the Zariski-Nagata purity theorem (\cite{5}, X.3.4) implies that the projections $s^V,t^V$ are everywhere \'etale and the groupoid $\mathcal X^V=[W^V/R^V]$ is a smooth DM stack satisfying the required properties.
\end{proof}

Similarly in the situation where $\mathcal X$ is a normal DM stack and $\mathcal D$ is a $\mathbf Q$-Cartier divisor, such that for any geometric point $x\in \mathcal X$, the minimal integer number $n(x)$ such that $n(x)\mathcal D_{|x}$ is Cartier satisfies $(n(x),\cha(k))=1$, there is a canonical cyclic cover of $\mathcal X$ in which $\mathcal D$ becomes everywhere Cartier. We will denote this stack by $\mathcal {X(D)}$, the \textit{Cartification} of $\mathcal D$. The construction is a copy-and-paste extension of that of Gorenstein covering stack:

\begin{proposition}[\cite{11} I.5.3]
There exists a cyclic cover $p_D:\mathcal{X(D)}\to \mathcal X$ such that $p^*\mathcal D$ is Cartier, satisfying the following universal property: if $\mathcal Y$ is a  DM stack and $q:\mathcal Y\to \mathcal X$ is such that $q^*\mathcal D$ is Cartier, then there exists a unique factorization $\mathcal Y\to \mathcal {X(D)}\to \mathcal X$.
\end{proposition}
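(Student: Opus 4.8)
The plan is to realize $\mathcal X(\mathcal D)$ as the \emph{cyclic cover} attached to the divisorial sheaves of $\mathcal D$, mimicking verbatim the index-one (Gorenstein) cover, and to read off both the Cartier property and the universal property from the grading. Since the assertion concerns a representable finite morphism and everything in sight is local for the \'etale topology on $\mathcal X$, I would first reduce to an atlas $W=\bigsqcup U_i/G_i\to\mathcal X$, exactly as in the Vistoli covering stack construction above, carry out the construction chart by chart, and then reassemble by taking the normalization of the fibre products $W(\mathcal D)\times_{\mathcal X}W(\mathcal D)$ to obtain a groupoid presenting $\mathcal X(\mathcal D)$. The hypothesis $(n(x),\cha(k))=1$ enters precisely to make this a \emph{tame} cover: over the locus where $\mathcal D$ is already Cartier the cover is a tame Kummer cover, hence finite \'etale, and the total space stays reduced and normal.

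For the construction itself, let $n$ be a locally constant, prime-to-$\cha(k)$ integer with $n\mathcal D$ Cartier, and form the coherent sheaf of $\mathcal O_{\mathcal X}$-algebras
$$\mathcal A \;=\; \bigoplus_{i=0}^{n-1}\mathcal O_{\mathcal X}(i\mathcal D),$$
where $\mathcal O_{\mathcal X}(i\mathcal D)\subset\mathcal K_{\mathcal X}$ is the reflexive Weil-divisorial sheaf. The multiplication is the canonical product of rational sections $\mathcal O_{\mathcal X}(i\mathcal D)\otimes\mathcal O_{\mathcal X}(j\mathcal D)\to\mathcal O_{\mathcal X}((i+j)\mathcal D)$, composed, when $i+j\geq n$, with a local trivialization of the \emph{invertible} sheaf $\mathcal O_{\mathcal X}(n\mathcal D)$ that lowers the degree by $n$. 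I would set $p_D\colon\mathcal X(\mathcal D)=\underline{\Spec}_{\mathcal X}(\mathcal A)\to\mathcal X$, check that $\mathcal A$ is finite over $\mathcal O_{\mathcal X}$ so that $\mathcal X(\mathcal D)$ is again a DM stack, and verify that on $\mathcal X(\mathcal D)$ the degree-one summand acquires a tautological nowhere-vanishing section; this trivializes $p_D^{*}\mathcal O_{\mathcal X}(\mathcal D)$, so that $p_D^{*}\mathcal O_{\mathcal X}(\mathcal D)$ is invertible and $p_D^{*}\mathcal D$ is Cartier. This is the standard mechanism by which a cyclic cover turns $\mathcal D$ into a Cartier divisor.

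For the universal property, suppose $q\colon\mathcal Y\to\mathcal X$ has $q^{*}\mathcal D$ Cartier. Then $q^{*}\mathcal O_{\mathcal X}(i\mathcal D)$ is the $i$-th power of the line bundle $q^{*}\mathcal O_{\mathcal X}(\mathcal D)$, and the canonical rational section of $\mathcal O_{\mathcal X}(\mathcal D)$ pulls back to a generator whose $n$-th power matches the chosen trivialization of $\mathcal O_{\mathcal X}(n\mathcal D)$. This datum is exactly an $\mathcal O_{\mathcal Y}$-algebra homomorphism $q^{*}\mathcal A\to\mathcal O_{\mathcal Y}$, i.e. a factorization $\mathcal Y\to\mathcal X(\mathcal D)\to\mathcal X$. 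Uniqueness I would obtain from the observation that any such factorization must send the degree-one generator of $\mathcal A$ to that pulled-back canonical section; two factorizations therefore agree over the dense open where $\mathcal D$ is Cartier, and hence everywhere, by normality and separatedness of $\mathcal X(\mathcal D)$.

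The main obstacle is the canonicity of the construction: defining the multiplication on $\mathcal A$ requires a trivialization of the invertible sheaf $\mathcal O_{\mathcal X}(n\mathcal D)$, which in general exists only locally, and two such trivializations differ by the $\mu_n$-action on the fibres, so a priori one produces only a $\mu_n$-gerbe rather than a well-defined stack. The delicate point is thus to show that the local cyclic covers glue canonically; this is exactly where the normalized-groupoid packaging together with the universal property does the work, pinning down $\mathcal X(\mathcal D)$ uniquely and absorbing the gerbe ambiguity. Once this is in place, the remaining verifications — coherence and finiteness of $\mathcal A$, normality and the DM property of the total space, and the Cartier property of $p_D^{*}\mathcal D$ — are the routine ``copy-and-paste'' from the Gorenstein covering stack, with tameness guaranteeing that the cyclic cover behaves as in characteristic zero.
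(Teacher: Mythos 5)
Your chart-level construction agrees with the paper's: over an \'etale neighborhood $V$ on which $\mathcal O_V(n\mathcal D)$ is trivialized by a section $f$, your algebra $\bigoplus_{i=0}^{n-1}\mathcal O(i\mathcal D)$ with the multiplication folded by $f$ is exactly the paper's local cover $T^n=f$ (indeed, since your summands are the reflexive divisorial sheaves, your chart algebra is automatically normal, which is slightly cleaner than the paper's $\mathcal O_x[T]/(T^{n(x)}-f)$, which needs a further integral closure). But there is a genuine gap at exactly the point you declare ``routine copy-and-paste'': you never verify that the two projections from $(V_i(\mathcal D)\times_{\mathcal X}V_j(\mathcal D))^{\norm}$ to the chart covers are \'etale, and this verification is essentially the entire content of the paper's proof. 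Two overlapping charts carry different local Cartier indices, since $n(x)$ jumps along $\mathcal D$, so one must compare $\mathcal O[T]/(T^n-f)$ and $\mathcal O[S]/(S^m-g)$ over a strictly henselian overlap with $(f)=I^n$, $(g)=I^m$. The paper sets $d=\gcd(n,m)$, $w=nm/d$, produces a unit $u$ with $f^{m'}=ug^{n'}$, extracts $u^{1/w}$ --- this is the one place where tameness $(w,\cha(k))=1$ and strict henselianity are actually used --- and shows the normalized fibre product splits as $\prod_{\zeta^w=1}\mathcal O[T]/(T^n-f)$, from which \'etaleness of both groupoid maps is read off. Without this computation you have no \'etale groupoid, hence no DM stack; your only appeal to tameness (the Kummer cover over the locus where $\mathcal D$ is already Cartier is \'etale) covers the easy locus, not the branch locus or the unequal-index overlaps. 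Your uniform choice of a single locally constant $n$ hides this: either you face the unequal-index comparison anyway, or, if you keep one $n$ everywhere, you incur the extra (also unverified) identification of the resulting degree-$n$ cover, which \'etale-locally is a disjoint union of copies of the minimal cover, with the Cartification after passing to the groupoid quotient.

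There is also a logical inversion in how you propose to absorb the $\mu_n$-ambiguity of the local trivializations. You write that ``the normalized-groupoid packaging together with the universal property does the work, pinning down $\mathcal X(\mathcal D)$ uniquely.'' The universal property cannot play this role: it becomes available only after the stack has been constructed, and in the paper it is a one-line consequence of the local nature of the construction, not an ingredient of the gluing. What actually eliminates the ambiguity is that the groupoid is built from the normalization of the fibre product itself --- a canonical object involving no choice of trivialization --- and the gerbe problem you correctly identify is dissolved precisely by proving that this canonical groupoid is \'etale over the charts. That proof is the missing step, and it is where the hypothesis $(n(x),\cha(k))=1$ earns its keep.
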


\begin{proof}
Let $x$ be a geometric point in $\mathcal X$, $V$ an \'etale neighborhood and $V^o$ its smooth locus. Up to shrinking, we can assume that $n\mathcal D$ is generated, over $V$, by a local section $f$ and that $\mathcal D$ is generated, over $V^o$, by a local section $t$. Inside the geometric line bundle $\Spec\ (\Sym\ \mathcal O_{V^o}(D)^{\vee})\to V^o$ consider the subvariety $W^o$ defined by the ideal $T^n-f=0$, where $T$ is a generator of $\Sym\ \mathcal O_{V^o}(D)^{\vee}$ as an $\mathcal O_{V^o}$-algebra. This is a cover of $V^o$ corresponding to the extraction of an $n$-th root of the unity $u\in \mathcal O_{V^o}^*$ satisfying $t^n-uf=0$, which is \'etale since $(n,\cha(k))=1$. This cover extends canonically to a ramified cover $V(\mathcal D)\to V$ by taking the integral closure of $\mathcal O_V$ in the function field of $W^o$. This gives a local description, with respect to an atlas of $\mathcal X$, of an atlas for $\mathcal {X(D)}$, in particular the \'etale local ring of $\mathcal{X(D)}$ at a geometric point $x$ is $\mathcal O_{x}[T]/(T^{n(x)}-f)$. To deduce the groupoid relation, we proceed as in the Vistoli situation, thus we need to check that the natural morphism $(V_i(\mathcal D)\times_{\mathcal X} V_j(\mathcal D))^{\norm}\to V_i(\mathcal D)$ is \'etale. After \'etale localization around a geometric point, this amounts to proving that the natural morphisms $$\mathcal O[T]/(T^n-f)\to \mathcal O[T,S]/(T^n-f,S^m-g)^{\norm}$$
$$\mathcal O[T]/(S^m-g)\to \mathcal O[T,S]/(T^n-f,S^m-g)^{\norm}$$ are \'etale, where $\mathcal O$ is a strictly henselian local ring with a height one prime ideal $I$ such that $(f)=I^n$ and $(g)=I^m$. Let $d=\gcd(n,m)$, $n=n'd$, $m=m'd$ and $w=nm/d$. There is a unit $u\in \mathcal O^*$ such that $f^{m'}=ug^{n'}$, thus we have a relation $T^w=uS^w$, and since $\mathcal O$ is strictly henselian and $(w,\cha(k))=1$ we see that $u^{1/w}\in \mathcal O$. Consequently we deduce a relation $\prod_{\zeta^{w}=1}T-\zeta u^{1/w}S=0$ in $\mathcal O[T,S]/(T^n-f,S^m-g)$. Taking its normalization we get a product of rings $$\prod_{\zeta^{w}=1}\mathcal O[T,S]/(T^n-f,S^m-g, T- \zeta u^{1/w}S)\simeq \prod_{\zeta^{w}=1} \mathcal O[T]/(T^n-f)$$ which implies that the natural morphisms defining the Cartification groupoid $$\mathcal O[T]/(T^n-f)\to \prod_{\zeta^{w}=1} \mathcal O[T]/(T^n-f)$$ $$\mathcal O[S]/(S^m-g)\to \prod_{\zeta^{w}=1} \mathcal O[T]/(T^n-f)$$ given respectively by $T\to \prod_{\zeta^{w}=1} T$ and $S\to \prod_{\zeta^{w}=1} \zeta u^{1/w}T$ are indeed \'etale, thus proving the existence of $\mathcal {X(D)}$ as a DM stack. The universal property follows by the local nature of the construction.
\end{proof}

\begin{remark}
The construction of the Cartification morphism depends heavily on the assumption that the Cartier index of the divisor is coprime to the characteristic of the base field. Without this assumption the local Cartification factors through a non trivial inseparable quotient, and there is no hope to create a DM stack out of it.
\end{remark}

In the sequel we will need a reformulation of the universal property of the Cartification.

\begin{lemma}\label{cartier}
Cartification is stable under pullbacks, that is if $f:\mathcal Y\to \mathcal X$ is any morphism and $\mathcal D$ is $\mathbf Q$-Cartier on $\mathcal X$ then there exists a canonical isomorphism $i_f:\mathcal Y(f^*\mathcal D)\to f^*\mathcal X(\mathcal D)$.
\end{lemma}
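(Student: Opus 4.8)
The plan is to prove the lemma purely formally, by showing that the $2$-fibre product $f^*\mathcal X(\mathcal D) = \mathcal Y\times_{\mathcal X}\mathcal X(\mathcal D)$ solves the universal problem that characterises $\mathcal Y(f^*\mathcal D)$, and then concluding by Yoneda. In this way the explicit root-extraction construction never reappears: the entire content reduces to juxtaposing the two universal properties, together with the observation that all the stacks in sight remain Deligne-Mumford stacks to which the Cartification applies.

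First I would check that both objects are defined. Writing $n$ for a multiple with $n\mathcal D$ Cartier, I interpret $f^*\mathcal D$ as the $\mathbf Q$-Cartier divisor characterised by $\mathcal O(nf^*\mathcal D)\cong f^*\mathcal O(n\mathcal D)$; since $nf^*\mathcal D$ is then Cartier, the Cartier index of $f^*\mathcal D$ at any geometric point divides $n$, hence is coprime to $\cha(k)$ by the standing hypothesis on $\mathcal D$. Thus $\mathcal Y(f^*\mathcal D)$ exists. Likewise $f^*\mathcal X(\mathcal D)$, being a $2$-fibre product of DM stacks, is again a DM stack, and it carries projections $\pi_1:f^*\mathcal X(\mathcal D)\to\mathcal Y$ and $\pi_2:f^*\mathcal X(\mathcal D)\to\mathcal X(\mathcal D)$ with $f\circ\pi_1\cong p_{\mathcal D}\circ\pi_2$.

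Next I would verify the two defining features of a Cartification over $\mathcal Y$. Pulling back along the fibre square, $\pi_1^*f^*\mathcal D$ agrees with $\pi_2^*p_{\mathcal D}^*\mathcal D$, and $p_{\mathcal D}^*\mathcal D$ is Cartier on $\mathcal X(\mathcal D)$ by construction; hence $f^*\mathcal D$ becomes Cartier after pullback to $f^*\mathcal X(\mathcal D)$. For the universal property, let $q:\mathcal Z\to\mathcal Y$ be any morphism with $q^*f^*\mathcal D$ Cartier. Then $(f\circ q)^*\mathcal D\cong q^*f^*\mathcal D$ is Cartier, so the universal property of $\mathcal X(\mathcal D)$ produces a unique $g:\mathcal Z\to\mathcal X(\mathcal D)$ with $p_{\mathcal D}\circ g\cong f\circ q$; the pair $(q,g)$ then factors $q$ through $\pi_1$, and uniqueness of this factorisation follows by composing the uniqueness clauses of the fibre product and of the universal property of $\mathcal X(\mathcal D)$. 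This exhibits $\bigl(f^*\mathcal X(\mathcal D),\pi_1\bigr)$ as a Cartification of $f^*\mathcal D$ over $\mathcal Y$, so $\mathcal Y(f^*\mathcal D)$ and $f^*\mathcal X(\mathcal D)$ are identified by a unique structure-preserving isomorphism $i_f$, which is the asserted canonical one.

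I expect the only genuinely delicate point to be the $2$-categorical bookkeeping: one must read off the factorisations and their uniqueness correctly up to the canonical $2$-isomorphisms of the fibre product, and interpret ``$\mathbf Q$-Cartier pullback'' through the line-bundle formulation so that $f$ may be an \emph{arbitrary} morphism rather than a flat or dominant one. The geometric input proper, namely that Cartier-ness of $\mathcal D$ is inherited along the projection $\pi_2$, is immediate, and the preservation of coprimality under pullback guarantees that no inseparable phenomenon obstructs the construction on the $\mathcal Y$-side.
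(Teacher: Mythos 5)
Your proposal is correct and is essentially the paper's own argument: the paper likewise proves the lemma purely formally from the universal property, constructing $i_f:\mathcal Y(f^*\mathcal D)\to f^*\mathcal X(\mathcal D)$ because $\mathcal D$ becomes Cartier on $\mathcal Y(f^*\mathcal D)$, and an inverse because $f^*\mathcal D$ is Cartier on $f^*\mathcal X(\mathcal D)$. Your variant of exhibiting $f^*\mathcal X(\mathcal D)$ directly as a Cartification of $f^*\mathcal D$ and invoking uniqueness is the same Yoneda-style argument repackaged, with the added (harmless, and slightly more careful) checks that $\mathcal Y(f^*\mathcal D)$ exists and that the coprimality of the Cartier index is preserved under pullback.
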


\begin{proof}
This is an easy application of the universal property: $\mathcal D$ becomes Cartier on $\mathcal Y(f^*\mathcal D)$ and this affords a morphism $i_f:\mathcal Y(f^*\mathcal D)\to f^*\mathcal X(\mathcal D)$. Since $\mathcal D$ is Cartier on $\mathcal X(\mathcal D)$, so it is after pullback to $f^*\mathcal X(\mathcal D)$, whence by the universal property we deduce an inverse to $i_f$.
\end{proof}

\subsection{Weighted blow-up and characters}\label{weighted}
Let $\mathcal X$ be a smooth DM stack, and $(a_1,...,a_r)$ an $r$-tuple of natural numbers. A blow-up with weights $(a_1,...,a_r)$ is the projectivization $\Proj_{(\oplus_{k\geq 0}I_k/I_{k+1})}\mathcal X\to \mathcal X$, where $I_k$ is a sheaf of ideals such that, \'etale locally, there exist functions $x_1,...,x_r$ forming part of a regular system of parameters and $I_k$ is generated by monomials $x_1^{\alpha_1}...x_r^{\alpha_r}$ with $a_1\alpha_1+...+a_r\alpha_r\geq k$. Then $\oplus_{k\geq 0}I_k/I_{k+1}$ is a sheaf of finitely generated graded algebras.

To give a local description let's assume we are blowing up the origin in the affine space over $k$ with weights $(a_1,...,a_n)$. It is covered by affine open sets $V_i=\Spec(R_i)$ where $R_i=k[ x_1^{\alpha_1}...x_n^{\alpha_n}/x_i^{c_i}$ s.t. $a_ic_i=a_1\alpha_1+...+a_n\alpha_n]$, and the morphism $R_i\to k[y_1,...,y_n]$ given by $x_i\to y_i^{a_i}$, $x_j\to y_jy_i^{a_j}$ induces an isomorphism $\Spec(k[y_1,...,y_n]^{\mu_{a_i}})=\mathbf A^n_k/\mu_{a_i}\to \Spec(R_i)$ where the roots of unity act by way of a generator $g\in \mu_{a_i}$ as follows: $y_i^g=\zeta y_i$, $y_j^g=\zeta^{-a_j}y_j$ with $\zeta$ an actual root of unity in $k^*$. Moreover the exceptional divisor has equation $y_i=0$ in the $i$-th chart. Plainly this local description doesn't make any sense whenever $\cha(k)=p> 0$ since $p$ might divide some $a_i$, so we need more care in defining cyclic group actions:

\begin{definition}\label{diagonal}
Let $\mathcal O$ be a regular local $k$-algebra of dimension $n$, and $l>0$ any natural number. A diagonal action of the cyclic group $C_l$ of order $l$ on $\mathcal O$ with characters $(a_1,..., a_n)$ is a morphism $\mathcal O\to \mathcal O[T]/(T^l-1)$ such that there exists a regular system of parameters $x_1,...,x_n$ with $x_i\to T^{a_i}x_i$. The equalizer of this action will be denoted by $\mathcal O^C$.
\end{definition}

An action of the cyclic group of order $l$ on the smooth DM stack $\mathcal X$ is an action of the group scheme $\Spec(\mathbf Z[T]/(T^l-1))$, i.e. a morphism $\mathcal X\times_{\mathbf Z}\Spec(\mathbf Z[T]/(T^l-1))\to \mathcal X$. It is clear what is means for the action to be diagonal with characters $(a_1,...,a_n)$ around a fixed geometric point.

The definition gives a description of the affine cover of the weighted blow-up in full generality, namely the open sets $V_i$ will be $\Spec(R_i)$ where $R_i$ is the algebra obtained as equalizer of the $\mu_{a_i}$-action on $k[y_1,...,y_n]$ with characters $(-a_1,...,-a_{i-1},1,-a_{i+1},...,a_n)$. Such equalizer is generated, as a $k$-algebra, by monomials, and indeed there is a clear interpretation in terms of toric geometry, \cite{7}.

We will be interested in weighted blow-ups induced by diagonal actions of cyclic groups.

\begin{definition}\label{algebra}
Notation as in \ref{diagonal}, the weighted algebra induced by the characters $(a_1,...,a_n)$ is the sheaf of algebras $A(a_1,...,a_n)=\oplus_{k\geq 0} I_k/I_{k+1}$ on $\Spec(\mathcal O)$, where $I_k=(x_1^{\alpha_1}...x_n^{\alpha_n}|a_1\alpha_1+...+a_n\alpha_n\geq k)$. $C$ acts naturally on $A(a_1,...,a_n)$ and its fixed sub-algebra $B(a_1,...,a_n)=A(a_1,...,a_n)^C$ is canonically a sheaf of algebras on $\Spec(\mathcal O^C)$.
\end{definition}

At which point one might question the dependence of such filtration by ideals $I_k$ on the choices made, which unfortunately turns out to be non-trivial:

\begin{example}
Let $C$ be a cyclic group of order $l$ acting tamely on $k[x,y]$ by way of $x\to \chi x,\ y\to \chi^{-1}y$ for some faithful character $\chi: C\to k^*$. The ring of invariants is $R=k[x^l,y^l,xy]$ whence the action of $\mu_2$ on the plane by way of $x\to y,\ y\to x$ naturally descends to an action on $R$. Observe however that if $l\neq 2$ the weighted algebra induced by the characters $(1,-1)$ of the $C$-action on the plane is not invariant under $\mu_2$, indeed the weighted filtration by ideals is $$I_k=(x^ay^b\ |\ a+(l-1)b\geq k)\subset k[x,y]$$ and this filtration is clearly not invariant under $\mu_2$.
\end{example}

From a global perspective, say on a smooth projective variety $X$ endowed with a tame action of $C$, the example is telling us that local choice of characters (around each fixed component) is by no means sufficient to deduce that the corresponding weighted algebras will patch as we travel around an open cover of $X$. However it turns out that a "global" choice of faithful character is enough to insist that local algebras glue, \ref{glue}.

\begin{definition}
Let $\mathcal O$ be a regular strictly henselian local ring with a diagonal action of a cyclic group $C$. Let $f\in \mathcal O$ be a non-unit, then we say that the action is faithful across the divisor $D(f)=(f=0)$ if, following \ref{diagonal}, there exists a choice of characters for the action of $C$, such that $f\to Tf$. The weighted algebra corresponding to the choice of characters making the action faithful across $D(f)$ will be denoted by $A(D(f))$. The sub-algebra of $C$-invariants will be denoted by $B(D(f))$.
\end{definition}

The following proposition is, broadly speaking, the solution to all our problems, in that it gives a sufficient condition for local weighted algebras to glue to a sheaf of algebras over an algebraic variety:

\begin{proposition}\label{glue}
Let $X$ be a variety with diagonal cyclic quotient singularities, $D$ a reduced, irreducible $\mathbf Q$-Cartier divisor. Assume that for every $x\in D$, $C_x$ is faithful across $D$. Then the local weighted algebras $B(D_x)$ glue to a sheaf of algebras $B$ on $X$.
\end{proposition}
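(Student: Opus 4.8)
The plan is to reduce the gluing to the statement that the local weighted algebra $A(D_x)$, and hence its invariant subalgebra $B(D_x)=A(D_x)^{C}$, depends only on the triple $(\mathcal O,\,C\text{-action},\,D)$ and not on the auxiliary choices — a regular system of eigenparameters, a generator of $C$, and a local equation $f$ of $D$ — entering Definition \ref{algebra}. Indeed $X$ carries an \'etale cover $\bigsqcup_i V_i\to X$ with $V_i=U_i/C_i$ presenting the diagonal cyclic singularities, and gluing amounts to producing canonical, cocycle-compatible isomorphisms between the two pullbacks of the $B(D_x)$ to the groupoid $R=\bigsqcup_{i,j}V_i\times_X V_j\rightrightarrows \bigsqcup_i V_i$. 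Once we know that the filtration $I_\bullet$ is manufactured functorially out of data that is itself intrinsic to $(X,D)$ and restricts compatibly along the two projections, \'etale descent for quasi-coherent sheaves of algebras delivers the global sheaf $B$.

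First I would fix the normalization that $D$ forces. Since $D$ is $\mathbf Q$-Cartier with index dividing $l=|C_x|$, hence prime to $\cha(k)$, its pullback to a strict henselization of the smooth cover becomes principal, so a local equation $f$ of $D$ exists there; by hypothesis the action is faithful across $D(f)$, i.e. $f\to Tf$. The character $\chi$ through which $C$ acts on $f$ is then faithful, hence a generator of the cyclic character group, and declaring $T\leftrightarrow\chi$ removes exactly the ambiguity in the identification $C\cong\mu_l$ that the Example exploited. With this identification every eigenparameter $x_i$ acquires a well-defined character $T^{a_i}$, and taking $a_i$ to be the representative in $[0,l)$ turns the weights $(a_1,\dots,a_n)$ — and therefore the filtration $I_k=(x_1^{\alpha_1}\cdots x_n^{\alpha_n}\mid \sum_i a_i\alpha_i\ge k)$ of Definition \ref{algebra} — into data attached canonically to $(\mathcal O,C,D)$ rather than to a choice of faithful character.

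The heart of the argument, and the step I expect to be the main obstacle, is to show that this \emph{integer-valued} filtration — as opposed to its reduction modulo $l$, which is just the intrinsic eigenspace decomposition $\mathcal O=\bigoplus_{j}\mathcal O_{(j)}$ — is independent of the chosen eigenbasis. I would identify $I_\bullet$ as the filtration by the monomial valuation $v_D$ that assigns to each eigenparameter the $[0,l)$-representative of its character, and then show that $v_D$ does not see the eigenbasis: if $\tilde x_i$ is another eigenparameter with the same character as $x_i$, then $\tilde x_i-x_i$ lies in $\mathcal O_{(a_i)}$, and since $a_i\in[0,l)$ is the least nonnegative integer in its residue class, every weight-homogeneous component of $\tilde x_i-x_i$ has weight $\equiv a_i\pmod l$ and hence $\ge a_i$; thus $v_D(\tilde x_i)=a_i$ and the substitution $x_i\mapsto\tilde x_i$ carries $I_\bullet$ to itself. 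The same computation shows $v_D$ is $C$-equivariant, so passing to invariants is harmless. Granting this, the filtration depends only on $(\mathcal O,C,D)$; along the groupoid $R$ the two presentations share the same \'etale-local ring, the same induced $C$-action (transported by the groupoid law, as in the Vistoli and Cartification constructions), and the same divisor $D$, whence $v_D$ — and with it $A(D_x)$ and $B(D_x)=A(D_x)^{C}$ — is matched by the two projections. The resulting isomorphisms on $R$ satisfy the cocycle condition because they are all induced by the identity on the common intrinsic object, and descent produces the sheaf of algebras $B$ on $X$. The delicate point throughout is exactly the one isolated by the Example: only the global, faithful divisor $D$ rigidifies the otherwise choice-dependent integer weights, and the argument must invoke $f\to Tf$ — not merely the eigenspace grading — at the precise place where the $\mu_2$-symmetry failed to preserve $I_\bullet$.
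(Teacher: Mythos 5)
Your proposal is correct in substance, but it travels a genuinely different road from the paper's own proof. The paper works \emph{downstairs}: after normalizing so that $D$ pulls back to $(x_n=0)$ with character $a_n=1$, it takes an arbitrary automorphism $\phi$ of the singular local ring $\mathcal O_x$ fixing $I_D$ and shows by direct computation that $\phi$ preserves the weighted algebra --- the trick being the $C_x$-invariant elements $y_i=x_n^{l-a_i}x_i$, whose $l$-th powers let one extract $\phi(y_i)=x_n^{l-a_i}p_i$ with $p_i$ again an eigenelement of character $a_i$, whence preservation of the filtration via the fraction-field identity for monomials; gluing follows because chart transitions induce exactly such automorphisms of $\mathcal O_x$. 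You instead work \emph{upstairs}: you identify the filtration with the monomial valuation attached to the $[0,l)$-representatives of the characters, rigidified by declaring the (faithful) character on a local eigen-equation of $D$ to be the standard generator, and you prove intrinsicness by the least-representative congruence argument ($\tilde x_i-x_i\in\mathcal O_{(a_i)}$ forces $v_D(\tilde x_i)\geq a_i$, and symmetry gives equality of the two filtrations); descent along the groupoid then does the rest. Each route buys something: the paper's computation never needs to discuss canonicity of the smooth local cover or lifting of automorphisms, and it produces explicit transformed eigencoordinates $p_i$; yours isolates more transparently exactly why the paper's Example fails without $D$ --- the mod-$l$ grading is intrinsic but its integer lift is not, and only the faithful character along $D$ pins the lift --- and, since it uses only the $\mathbf Z/l$-grading and no averaging, it visibly covers the non-tame diagonal $\mu_l$-actions for which \ref{glue} is invoked inside \ref{quasi}. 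Two routine points should be made explicit to close your version: (i) the character by which $C_x$ acts on an eigen-generator $f$ of $I_D$ does not depend on the chosen generator (two eigen-generators differ by a unit, and a unit eigenvector has trivial character because the action on the residue field is trivial), so your normalization $T\leftrightarrow\chi$ is well posed; and (ii) a lift of a chart transition to the local smooth cover is a priori equivariant only up to an automorphism $\sigma$ of $C_x$, but applying the lift to $f$, using (i) and the faithfulness of $\chi$, forces $\sigma=\mathrm{id}$, after which your change-of-eigenbasis computation applies verbatim.
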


\begin{proof}
First some notation: fix $x\in X$ lying on the support of $D$ and denote by $I_D$ the ideal defining $D$ around $x$. Set $l=|C_x|$. Let $\mathcal O_x^V$ be a strictly henselian, regular local ring such that $\mathcal O_x=(\mathcal O_x^V)^{C_x}$ and $x_1,...,x_n$ a regular system of parameters in $\mathcal O_x^V$ such that $I_D\cdot \mathcal O_x^V=(x_n=0)$, and $C_x$ acts by characters $x_i\to T^{a_i}x_i$ with $a_n=1$. Let $\phi$ be an automorphism of $\mathcal O_x$ fixing the ideal $I_D$. Observe that $I_{D}^l=(x_n^l)$ whence $\phi(x_n^l)=ux_n^l$ for some unit $u$. Consider the elements $y_i=x_n^{l-a_i}x_i\in I_{D}$. We have $$\phi(y_i^l)=\phi(x_n^{l(l-a_i)}x_i^l)=\phi(x_n^{l(l-a_i)})\phi(x_i^l)=u^{l-a_i}x_n^{l(l-a_i)}\phi(x_i^l)$$On the other hand $\phi(y_i)\in I_{D}$ whence $\phi(y_i)=x_n^{b_i}p_i$ for some $b_i>0$ and $p_i\in \mathcal O_x^V$ not a zero divisor in $\mathcal O_x^V/(x_n)$, so then $lb_i=l(l-a_i)$, i.e. $\phi(x_n^{l-a_i}x_i)=x_n^{l-a_i}p_i$. Thus $x_n^{l-a_i}p_i$ is fixed by $C_x$, and the action must be $p_i\to T^{a_i}p_i$ for every $i$. Moreover, the identity 
$$x_1^{\alpha_1}...x_n^{\alpha_n}=(x_1x_n^{l-a_1})^{\alpha_1}...(x_1x_n^{l-a_n})^{\alpha_n}x_n^{\alpha_n-\sum_1^{n-1}\alpha_i(l-a_i)}$$
inside the fraction field $f.f.(\mathcal O_x^V)$ implies more generally that 
$$\phi(x_1^{\alpha_1}...x_n^{\alpha_n})=u^{l^{-1}\sum_{i=1}^n a_i\alpha_i -\sum_{i=1}^{n-1}\alpha_i}p_1^{\alpha_1}...p_{n-1}^{\alpha_1}x_n^{\alpha_n}\\\ (l|\sum_{i=1}^n a_i\alpha_i)$$
which clearly implies that $\phi$ preserves the weighted algebra induced by these characters. Whence local weighted algebras $B(D_x)$ patch to a sheaf of algebras on $X$.
\end{proof}

\subsection{Toric varieties} 
Here we recall basic notation of the theory of toric varieties, following \cite{7}, Chap. 1. The scope of this section is to give a combinatorial interpretation of diagonal cyclic quotient singularities, and re-interpret the observations in \ref{weighted} from this perspective.

Let $V$ be an $n$-dimensional real vector space. The choice of a basis gives a lattice $\mathbf Z^n\subset \mathbf R^n\simeq V$. Let $\sigma\subset V$ be a cone which is generated by finitely many elements in $\mathbf Z^n$. The affine toric variety $X_{\sigma}$ is defined as $\Spec(k[\sigma^*\cap \mathbf Z^n])$, where $\sigma^*$ is the dual cone to $\sigma$, and $k[\sigma^*\cap \mathbf Z^n]$ is the algebra generated by monomials $x_1^{a_1}\dots x_n^{a_n}$ for $(a_1,...,a_n)\in \sigma^*\cap \mathbf Z^n$. It is easy to see that toric open subsets of $X_{\sigma}$ correspond similarly to the faces of $\sigma$, whence whenever $\tau\subset \mathbf R^n$ is a fan (i.e. a polyhedron which is obtained as union of finitely many, finitely generated cones intersecting along common faces) we have a toric variety $X_{\tau}$ obtained by glueing the affine toric varieties corresponding to the cones defining $\tau$.

A fundamental construction admitting an interpretation in terms of the toric cone is that of blow-up: given a toric cone $\sigma =<v_1,...,v_m>$ and a vector $v\in \sigma$ which is not in $\sum_{i=1}^m \mathbf N v_i$, we can consider the fan $\sigma_v$ obtained as the union of the cones $\sigma_i=< v_1,...,v_{i-1},v,v_{i+1},...,v_n >$. Then there exists a sheaf of ideals $I_v$ in $X_{\sigma}$ such that the normalization of $Bl_{I_v}X_{\sigma}$ is isomorphic to $X_{\sigma_v}$, and the toric morphism induced by the inclusion $\sigma_v\to \sigma$ corresponds to the natural projection $(Bl_{I_v}X_{\sigma})^{\norm}\to X_{\sigma}$ under this isomorphism. Now we turn our attention to the class of cyclic quotient singularities by observing that a formal germ of cyclic quotient singularity is always toric, by a trivial extension of \cite{7}, pp. 16-18:

\begin{proposition}\label{toric}
Let $e_1,...,e_n$ be a basis of a given real $n$-dimensional vector space $V$ and let $a_1,...,a_{n-1},l$ be natural numbers, $l\neq 0$. Then the cone $\sigma$ generated by $e_1,...,e_{n-1},le_n-(a_1e_1+...+a_{n-1}e_{n-1})$ corresponds to the quotient of $\mathbf A^n_k$ by the action of the cyclic group of order $l$ with characters $a_1,...,a_{n-1},1$.
\end{proposition}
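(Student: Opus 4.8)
The plan is to exhibit the isomorphism directly on the level of semigroup algebras, by comparing the standard lattice $N=\mathbf Z^n=\langle e_1,\dots,e_n\rangle$ against the finer data of the cone generators. Write $u_i=e_i$ for $i<n$ and $u_n=le_n-(a_1e_1+\dots+a_{n-1}e_{n-1})$, and let $N_0\subset N$ be the sublattice they generate. The transition matrix is upper triangular with diagonal $(1,\dots,1,l)$, so its determinant is $l$; hence $u_1,\dots,u_n$ form a $\mathbf Z$-basis of $N_0$ and $[N:N_0]=l$. With respect to $N_0$ the cone $\sigma=\langle u_1,\dots,u_n\rangle$ is smooth, so $X_{\sigma,N_0}\cong\mathbf A^n_k=\Spec k[y_1,\dots,y_n]$ with $y_j=x^{u_j^*}$ the coordinate dual to $u_j$. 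Passing from $N_0$ to the finer lattice $N$ leaves the cone unchanged; dually $M=N^\vee$ becomes a sublattice of $M_0=N_0^\vee$, so $k[\sigma^\vee\cap M]$ sits as a subalgebra of $k[\sigma^\vee\cap M_0]$. This inclusion is precisely the operation of taking a finite quotient, and the whole content of the proposition is to identify that quotient with the asserted diagonal action.

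Concretely, I would dualize the transition matrix. Since $\sigma^\vee$ is the first orthant in $M_0=\langle u_1^*,\dots,u_n^*\rangle$, we have $\sigma^\vee\cap M_0=\mathbf Z^n_{\geq 0}$, recovering the polynomial ring $k[y_1,\dots,y_n]$. The sublattice $M=N^\vee\subset M_0$ is spanned by $e_i^*=u_i^*-a_iu_n^*$ for $i<n$ and $e_n^*=lu_n^*$, from which a short computation shows
\[
M=\{\,w\in M_0 : a_1w_1+\dots+a_{n-1}w_{n-1}+w_n\equiv 0 \pmod{l}\,\},
\]
where $w=\sum_j w_j u_j^*$. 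Consequently $k[\sigma^\vee\cap M]$ is the subalgebra of $k[y_1,\dots,y_n]$ spanned by those monomials $y_1^{w_1}\cdots y_n^{w_n}$ with $w_j\geq 0$ whose exponents satisfy $\sum_{i<n}a_iw_i+w_n\equiv 0\pmod l$.

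Finally I would recognize this subalgebra as the ring of invariants. Setting $a_n=1$, the diagonal action of Definition \ref{diagonal} sends $y_j\mapsto T^{a_j}y_j$, so the coaction multiplies $y^w$ by $T^{\sum_j a_j w_j}$, and its equalizer is spanned exactly by the monomials with $\sum_j a_j w_j\equiv 0\pmod l$ — precisely the algebra above. This identifies $X_\sigma=\Spec k[\sigma^\vee\cap M]$ with $\mathbf A^n_k/C_l$ for the action with characters $a_1,\dots,a_{n-1},1$; invariantly the acting group is $\Hom(M_0/M,k^*)\cong\mu_l$. The argument is a direct extension of \cite{7}, pp. 16--18, and presents no essential obstacle; the only points demanding care are bookkeeping — getting the direction of the refinement right (the \emph{finer} lattice yields the quotient) and checking that the distinguished last coordinate carries character $1$ — together with the observation that phrasing invariants as an equalizer, per Definition \ref{diagonal}, makes the statement insensitive to whether $\cha(k)$ divides $l$.
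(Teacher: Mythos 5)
Your proposal is correct and follows essentially the same route as the paper: both pass to the index-$l$ sublattice spanned by the cone generators, identify the corresponding dual-lattice semigroup algebra with $k[y_1,\dots,y_n]$, and recognize $k[\sigma^{\vee}\cap M]$ as the equalizer of the diagonal coaction via the congruence $\sum_j a_jw_j\equiv 0\pmod l$ on monomial exponents (the paper phrases this as $m(e_n)\in\mathbf Z$, which is the same condition). Your closing remark that the equalizer formulation keeps the statement valid when $\cha(k)\mid l$ matches the paper's intent in Definition \ref{diagonal}.
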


\begin{proof}
The basis $e_1,...,e_n$ determines a free group $\mathbf Z^n\subset V$. Let $N\subset \mathbf Z^n$ be the subgroup of index $l$ generated by $e_1,...,e_{n-1},le_n-(a_1e_1+...+a_{n-1}e_{n-1})$, and let $M$ be its dual lattice in $V^*$. Observe that $\sigma^*$ is generated by $$e_i^*+a_il^{-1}e_n^*\   \quad (i\leq n-1), \quad l^{-1}e_n$$ which is basis of $M$, whence $\Spec(k[\sigma^*\cap M])=\mathbf A^n_k$ with coordinates $x_1,...,x_n$ corresponding to this choice of generators for $\sigma^*$. Also the choice of a generator $\zeta\in \mu_l$ gives an isomorphism $\mathbf Z^n/N\simeq \mu_l$ obtained by sending $e_n\to \zeta$. Consider now the $\mu_l$ action on $\Spec(k[\sigma^*\cap M])=\mathbf A^n_k$ with characters $(a_1,..,a_{n-1},1)$. On monomials the action is given by $$\mathbf x^m\to T^{lm(e_n)}\mathbf x^m\ \quad    (m\in \sigma^*\cap M)$$ whence the fixed subalgebra is generated by monomials $\mathbf x^m$ such that $m(e_n)\in \mathbf Z$, which is evidently $k[\sigma^*\cap \mathbf Z^n]$.
\end{proof}

The next observation expands the toroidal view on diagonal cyclic quotient singularities \ref{toric}, by giving a toroidal interpretation of weighted blow-ups induced by characters of diagonal cyclic actions:

\begin{proposition}\label{decomposition}
Let $C$ be a cyclic group of order $l$ acting on $\mathbf A^n_k$ diagonally with characters $a_1,...,a_{n-1},1$, and let $X_{\sigma}$ be the quotient variety. Let $A=A(a_1,...,a_{n-1},1)$ and $B=B(a_1,...,a_{n-1},1)$, cf \ref{algebra}. Then $\Proj_B(X_{\sigma})$ has only diagonal cyclic quotient singularities of orders $a_1,...,a_{n-1}$. Moreover such modification coincides with the toric blow-up obtained by way of the decomposition of $\sigma$, as defined in \ref{toric}, by adding the vector $e_n$.
\end{proposition}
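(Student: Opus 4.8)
The plan is to prove the two assertions separately, but both by reduction to the toric dictionary already established. Let me set up notation following Definition \ref{algebra}: $C=C_l$ acts on $\mathbf A^n_k=\Spec\ k[y_1,\dots,y_n]$ with characters $(a_1,\dots,a_{n-1},1)$, so $y_i\to T^{a_i}y_i$ and $y_n\to Ty_n$, and $X_\sigma=\Spec(\mathcal O^C)$ is the quotient by \ref{toric}. The weighted algebra $A=\oplus_k I_k/I_{k+1}$ is built from the filtration $I_k=(y_1^{\alpha_1}\cdots y_n^{\alpha_n}\mid a_1\alpha_1+\dots+a_{n-1}\alpha_{n-1}+\alpha_n\geq k)$, and $B=A^C$. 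The key point to exploit is that the weights $(a_1,\dots,a_{n-1},1)$ defining $I_k$ are \emph{exactly} the characters of the $C$-action, so the weighted blow-up is canonically $C$-equivariant and descends to $\Spec(\mathcal O^C)=X_\sigma$; this is the content of the statement that $\Proj_B(X_\sigma)$ makes sense.

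The first step is to identify $\Proj_A(\mathbf A^n_k)$ with the weighted blow-up of the origin described explicitly in \ref{weighted}. There the $i$-th chart is $\Spec(R_i)$ with $R_i$ the equalizer of a $\mu_{a_i}$-action on a polynomial ring, and the exceptional divisor is $(y_i=0)$. Passing to $C$-invariants chart by chart, I would show $\Proj_B(X_\sigma)$ is covered by the quotients $\Spec(R_i^C)$, and that each such chart carries a residual diagonal cyclic quotient singularity of order $a_i$ — coming precisely from the $\mu_{a_i}$ appearing in the local description of the weighted chart — while the $n$-th chart, where $a_n=1$, is smooth. This yields the claim that $\Proj_B(X_\sigma)$ has only diagonal cyclic quotient singularities of orders $a_1,\dots,a_{n-1}$.

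The second step is the toric matching. By the blow-up construction recalled before \ref{toric}, adding the primitive vector $e_n$ to the cone $\sigma=\langle e_1,\dots,e_{n-1},\,le_n-(a_1e_1+\dots+a_{n-1}e_{n-1})\rangle$ produces a fan $\sigma_{e_n}$ whose charts $\sigma_i$ are obtained by replacing each generator in turn by $e_n$. I would compute the dual cones $\sigma_i^*$ and check that the monomial algebra $k[\sigma_i^*\cap\mathbf Z^n]$ coincides with the invariant chart algebra $R_i^C$ found in the first step; simultaneously, the sheaf of ideals $I_{e_n}$ furnished by that construction must be identified, under the quotient $\mathbf A^n_k\to X_\sigma$, with the Rees-type ideal generating the $B$-algebra, so that $(\Bl_{I_{e_n}}X_\sigma)^{\norm}\cong\Proj_B(X_\sigma)$. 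The order-$a_i$ singularity in chart $i$ then reappears on the toric side as the index of the sublattice generated by the $n$ ray generators of $\sigma_i$, matching the two descriptions.

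The main obstacle I expect is the careful \emph{bookkeeping of the residual singularity orders} in the first step: extracting the weighted-blow-up charts in the genuinely positive-characteristic setting of Definition \ref{diagonal}, where $p$ may divide some $a_i$, means one cannot simply borrow the classical formula $y_i\to y_i^{a_i}$, and one must work with the equalizer presentation throughout and track how the $C$-action interacts with the residual $\mu_{a_i}$ in each chart. Verifying that taking $C$-invariants commutes with $\Proj$ of the filtration — i.e. that $\Proj_B(X_\sigma)$ really is the quotient of $\Proj_A(\mathbf A^n_k)$ and not merely a birational model — is the technical heart; once that compatibility is in hand, the toric identification in the second step is a direct, if slightly tedious, dual-cone computation.
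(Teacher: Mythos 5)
Your plan follows the same broad route as the paper (chart analysis of $\Proj_A(\mathbf A^n_k)$, descent to the quotient, then toric matching), but it defers rather than supplies the one idea that makes the first step work, and that deferral is a genuine gap. You say you would show each invariant chart $\Spec(R_i^C)$ ``carries a residual diagonal cyclic quotient singularity of order $a_i$,'' flagging the bookkeeping as the main obstacle. But a priori this is false-looking: $\Spec(R_i^C)$ is the quotient of the smooth cover by the group generated by the residual $\mu_{a_i}$ \emph{and} the lift of $C$, so naively one expects a singularity of order involving $l\cdot a_i$, not $a_i$. The point that dissolves this — and it is the entire content of the paper's proof — is that because the filtration weights defining $A$ are \emph{exactly} the characters of the $C$-action, the lifted $C$-action in the $i$-th chart reads $y_i\to Ty_i$, $y_j\to y_j$ for $j\neq i$ (check: $x_i=y_i^{a_i}\to T^{a_i}x_i$ and $x_j=y_jy_i^{a_j}\to T^{a_j}x_j$). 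That is, $C$ acts by pseudoreflections across the exceptional divisor, so quotienting by $C$ merely replaces $y_i$ by $y_i^l$ on the smooth cover and leaves each chart isomorphic to $\mathbf A^n/\mu_{a_i}$: the singularity orders $a_1,\dots,a_{n-1}$ survive unchanged, and the $n$-th chart stays smooth. Without this observation your ``bookkeeping'' has no mechanism to rule out the composite singularity of order $la_i$, so the first step as written would stall.

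Two smaller remarks. First, the compatibility you call the technical heart — that $\Proj_B(X_\sigma)$ is genuinely the quotient $\Proj_A(\mathbf A^n_k)/C$ and not just birational to it — is not where the difficulty lies: $C$ is the diagonalizable group scheme $\mu_l$, which is linearly reductive in every characteristic (even when $p\mid l$), and it acts on the graded algebra $A$ monomially, so taking invariants commutes with everything graded; the paper simply asserts $\Proj_A(\mathbf A^n_k)/C=\Proj_B(X_\sigma)$ on this basis. Second, your worry that in characteristic $p$ one ``cannot borrow the classical formula $x_i\to y_i^{a_i}$'' is misplaced: the chart description at the start of \ref{weighted} is set up precisely via equalizers of $\mu_{a_i}$-actions so that these formulas are valid regardless of whether $p$ divides $a_i$; the genuine positive-characteristic subtlety is the possible non-tameness of $\mu_l$ itself, which the pseudoreflection/linear-reductivity argument handles. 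Your second step (dual-cone computation chart by chart) is a legitimate, if more laborious, substitute for the paper's ``inspection,'' and would go through once the first step is repaired.
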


\begin{proof}
By the local description given at the beginning of \ref{weighted}, $\Proj_A(\mathbf A^n_k)$ has diagonal cyclic quotient singularities of orders $a_1,...,a_{n-1}$. The action of $C$ lifts to $\Proj_A(\mathbf A^n_k)$, and it acts by pseudoreflections across the exceptional divisor of $\Proj_A(\mathbf A^n_k)\to \mathbf A^n_k$. It follows that $\Proj_A(\mathbf A^n_k)/C=\Proj_{B}(X_{\sigma})$ has diagonal cyclic quotient singularities of orders $a_1,...,a_{n-1}$. Inspection of the local structure of $\Proj_{B}(X_{\sigma})$ around the exceptional divisors reveals immediately that the said toric blow-up gives the same output.
\end{proof}

\section{Resolution}

The aim of this section is to prove the main theorem.

\begin{thm}\label{main}
Let $k$ be an algebraically closed field, and $X/k$ a quasi-projective variety with tame quotient singularities, with an \'etale cover $\bigsqcup_j X_j\to X$ such that $\max_{x\in X_j}|G_x|$ is finite for every $j$. Then there exists a resolution functor $X\to (M(X),r_X)$ where $M(X)$ is a smooth, quasi projective variety, and $r_X:M(X)\to X$ is a proper, birational, relatively projective morphism which is an isomorphism over the smooth locus of $X$. The resolution functor commutes with \'etale base change, that is to say for every \'etale morphism $f:Y\to X$ there is a unique isomorphism $\phi_f:f^*M(X)\to M(Y)$.
\end{thm}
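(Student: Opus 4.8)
The plan is to induct on an invariant $i(X)\in \mathbf{N}$ measuring the complexity of the tame quotient singularities, normalized so that $i(X)=0$ if and only if $X$ is smooth. A natural candidate is assembled from the orders $|G_x|$ of the geometric stabilizers, refined by the dimension of the locus on which a given order is attained and by whether that stabilizer is cyclic; the hypothesis $\max_{x\in X_j}|G_x|<\infty$ is exactly what guarantees the invariant is well-defined and that the induction terminates. The induction alternates the two steps announced in the introduction: Step \ref{step1} replaces an arbitrary tame quotient singularity by a tame \emph{cyclic} one without increasing $i$, producing along the way a marked irreducible divisor (the pair recorded in \ref{summ}); Step \ref{step2} then takes such a pair and performs a weighted blow-up that strictly decreases $i$. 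Composing the result with the resolution functor supplied by the inductive hypothesis applied to the strictly-smaller-invariant output of Step \ref{step2} yields $M(X)$.

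First I would set up Step \ref{step1} on the Vistoli covering stack $\mathcal{X}^V\to X$, which is smooth and étale in codimension one, carrying the residual action of $G_x/H_x$ by the stabilizer modulo pseudoreflections. The aim is to abelianize and then cyclicize these residual stabilizers by blowing up along intrinsically-defined smooth centers: the fixed loci, smooth by \ref{fixed}, the eigenvalue decomposition of the tame action, and the Cartification of an equivariant divisor (stable under pullback by \ref{cartier}) all commute with étale localization, so every center is canonical. The subtle point is that the smooth model obtained this way is naturally a Deligne–Mumford stack $\mathcal{Z}$ with strictly smaller invariant; I would resolve $\mathcal{Z}$ by the inductive hypothesis, and then descend to its GC quotient, an algebraic space by \cite{6}, invoking the inductive hypotheses a second time to force quasi-projectivity rather than mere algebraicity.

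For Step \ref{step2} I would exploit Proposition \ref{glue}: given cyclic quotient singularities and a reduced, irreducible $\mathbf{Q}$-Cartier divisor $D$ across which every stabilizer acts with faithful character, the local weighted algebras $B(D_x)$ glue to a global sheaf of algebras $B$, so the weighted blow-up $\Proj_B X\to X$ is defined globally. By \ref{decomposition} this coincides with the evident toric blow-up and yields cyclic quotient singularities of orders $a_1,\dots,a_{n-1}$, each strictly smaller than the original order $l$, whence $i$ strictly drops; the faithful-character condition is precisely what \ref{glue} demands and, by \ref{faithful}, exactly what survives injective equivariant pullback so as to propagate to the next stage. Functoriality is threaded through both steps: every center, every cover, and the sheaf $B$ is pinned down by a universal property or by the intrinsic eigendata of the tame action, so for étale $f:Y\to X$ each construction commutes with pullback and the pieces assemble into the unique isomorphism $\phi_f:f^*M(X)\to M(Y)$.

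The hard part will be the local-to-global gluing in Step \ref{step1} together with its compatibility with étale localization. The Example preceding \ref{glue} shows that a merely local choice of characters fails to patch, so the real content lies in isolating the canonical global datum—the marked divisor and its faithful character—that makes \ref{glue} applicable and renders the entire algorithm functorial. Establishing that the GC quotient of the resolved auxiliary stack $\mathcal{Z}$ is again quasi-projective, and not only an algebraic space, is the other delicate point, and it is exactly where the inductive hypotheses must be fed back into the argument.
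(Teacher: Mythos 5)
There is a genuine gap in your Step \ref{step2}, and it is precisely the point the paper flags as unavoidable in positive characteristic. After the weighted blow-up $\Proj_B X\to X$, Proposition \ref{decomposition} produces diagonal cyclic quotient singularities of orders $a_1,\dots,a_{n-1}$, and nothing prevents $\cha(k)$ from dividing some $a_i$: the new stabilizers are group schemes $\mu_{a_i}$ which need not be tame. Your conclusion ``whence $i$ strictly drops'' therefore does not close the induction, because the main inductive hypothesis is stated only for \emph{tame} quotient singularities and cannot be applied to the output. The paper devotes Lemma \ref{quasi} to exactly this: an inner induction on the maximal \emph{non-tame} stabilizer order $j(X)$, which works because every non-tame stabilizer created by a weighted blow-up is automatically faithful across the new exceptional divisor (by the local description in \ref{weighted}), while faithfulness across strict transforms persists by \ref{faithful}; this restores tameness before re-entering the main induction. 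Without this device (or an equivalent) your algorithm is only valid in characteristic zero, where tameness is vacuous.

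Your Step \ref{step1} also diverges from the paper's mechanism in a way that leaves its two essential outputs unconstructed. You propose to ``abelianize and then cyclicize'' the residual stabilizers by blowing up fixed loci and eigenvalue strata on the Vistoli stack, but you give no reason such blow-ups make the residual groups cyclic or decrease any invariant. The paper instead blows up the maximal-stabilizer stratum $Z\subset X$ itself; by \ref{henselian} the stabilizer acts on the local equation of the exceptional divisor $E$ through a character $\chi$, necessarily nontrivial when the invariant fails to drop, and the Cartification $Y(E)$ replaces each maximal stabilizer by $\ker\chi$, forcing the strict drop $i(Y(E))<i(Y)$ that lets the inductive hypothesis, in the stack form \ref{corol}, resolve $Y(E)$. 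Cyclicity and the marked divisor then come out of the GC quotient structurally: $m\colon Y_1\to X_1$ is locally a cyclic cover with $Y_1=X_1(E_1)$, where $E_1$ descends from $E$, and \ref{faithful} transports faithfulness along the birational map $Y_1\to Y(E)$. You assert a marked irreducible divisor is ``produced along the way'' but never say what it is, nor why the stabilizers of the GC quotient are cyclic and faithful across it --- and that pair $(X_1,E_1)$ of \ref{summ} is exactly the datum that licenses \ref{glue} in Step \ref{step2}, as your own closing paragraph correctly identifies as the crux. (A minor point: the paper's invariant is simply $\max_{x}|G_x|$; your refinement by dimension and cyclicity is unnecessary once the two steps alternate as above.)
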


Before getting into the proof, let's observe a few immediate corollaries in the spirit of \cite{10}, 3.42, 3.43, 3.44:

\begin{cor}\label{coro}
Let $X$ be an analytic space with tame quotient singularities. Then there exists a relatively projective, proper and birational morphism $X'\to X$ with $X'$ smooth.
\end{cor}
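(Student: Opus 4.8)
The plan is to derive this purely analytic statement from the algebraic Main Theorem \ref{main} by the descent philosophy of \cite{10}, 3.42--3.44: a resolution procedure that commutes with \'etale base change automatically glues on objects that are only \'etale-locally quasi-projective, and analytic spaces with tame quotient singularities are precisely of this kind. The point of functoriality here is that \ref{main} requires quasi-projectivity, a global condition, whereas gluing local resolutions needs only the \emph{local} models to be quasi-projective; this is what lets us dispense with any compactness hypothesis on $X$. First I would set up local algebraic models. By definition $X$ is analytically-locally isomorphic, around each point $x$, to a quotient $U/G_x$ of a smooth analytic space by a finite tame group, equivalently its completed local ring is $k[[x_1,\dots,x_n]]^{G_x}$; by Artin approximation this formal quotient singularity is \'etale-locally isomorphic to an \emph{algebraic} tame quotient singularity, i.e. an affine, hence quasi-projective, variety $V=U/G$ with $U$ smooth and $G$ finite acting tamely. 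Thus one obtains an analytic cover $\{W_\alpha\to X\}$ together with \'etale correspondences $W_\alpha\leftarrow T_\alpha\to V_\alpha$ identifying each chart, up to \'etale maps, with a quasi-projective model $V_\alpha$ to which \ref{main} applies; the finiteness hypothesis $\max|G_x|<\infty$ is automatic after shrinking each $W_\alpha$ to a relatively compact neighborhood, on which only finitely many conjugacy classes of stabilizers occur.

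Next I would resolve each model and glue. Applying \ref{main} to $V_\alpha$ yields the canonical resolution $r_{V_\alpha}:M(V_\alpha)\to V_\alpha$, whose analytic pullback along the correspondence $T_\alpha$ produces a resolution of $W_\alpha$. On an overlap $W_\alpha\cap W_\beta$ the two charts are related by an \'etale correspondence, and the functoriality clause of \ref{main}---the unique isomorphism $\phi_f:f^*M(V)\to M(Y)$ attached to every \'etale $f:Y\to V$---furnishes a canonical isomorphism between the two induced resolutions over that overlap. The \emph{uniqueness} of these $\phi_f$ forces their compatibility on triple overlaps, i.e. the cocycle condition, so the local resolutions carry canonical descent data. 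Hence they glue to a single analytic space $X'$ equipped with a morphism $r:X'\to X$ whose restriction to each $W_\alpha$ recovers the analytification of $r_{V_\alpha}$.

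Finally I would check that the required properties are all local on the base $X$ and therefore descend from the algebraic models: properness, relative projectivity, and birationality are each verified on the charts $W_\alpha$, where they hold because $r_{V_\alpha}$ enjoys them by \ref{main}, and smoothness of $X'$ is likewise a local matter (being an isomorphism over the smooth locus descends for free). The main obstacle I expect is not a single geometric estimate but the bookkeeping of the descent: one must arrange the \'etale correspondences between analytic charts and algebraic models coherently enough that the comparison isomorphisms $\phi_f$ can be composed across overlaps, and apply Artin approximation so that the chosen algebraic models are themselves related, up to \'etale equivalence, by maps for which \ref{main} supplies the needed isomorphisms. Once this topological and categorical gluing is organized, the statement follows formally.
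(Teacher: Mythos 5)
Your proposal follows essentially the same route as the paper, whose entire proof is a one-line observation that $X$ admits a countable \'etale cover by analytic charts with bounded geometric stabilizers, the gluing via the unique isomorphisms $\phi_f$ being delegated to the mechanism of \cite{10}, 3.42--3.44. You have simply written out in full the Artin-approximation and cocycle bookkeeping that the paper leaves implicit, so there is no substantive difference in approach.
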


\begin{proof}
We can find an \'etale cover of $X$, consisting of at most countably many analytic spaces satisfying the condition that each of them has bounded geometric stabilizers.
\end{proof}

\begin{cor}\label{corol}
Let $\mathcal X$ be a DM stack with tame quotient singularities. Then there exists a representable, proper, birational, relatively projective morphism $\mathcal X'\to \mathcal X$ with $\mathcal X'$ smooth.
\end{cor}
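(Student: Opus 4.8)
The strategy is to descend the resolution functor of \ref{main} along an \'etale atlas, in the spirit of the philosophy advocated in \cite{10}. First I would choose a presentation of $\mathcal X$ as a groupoid $(s,t):R\rightrightarrows U$, where $U\to\mathcal X$ is an \'etale atlas by a scheme, $R=U\times_{\mathcal X}U$ (a scheme, since the diagonal of $\mathcal X$ is representable), and $s,t$ are the two projections, which are \'etale because $U\to\mathcal X$ is. Since $\mathcal X$ has tame quotient singularities, so do $U$ and $R$. To apply \ref{main} I first resolve $U$: covering $U$ by quasi-projective opens $U_\alpha$ (arranging bounded geometric stabilizers on each, after the countable decomposition of \ref{coro} if $\mathcal X$ is not quasi-compact) I obtain resolutions $M(U_\alpha)$, and the functoriality under the open immersions $U_\alpha\cap U_\beta\hookrightarrow U_\alpha$ provides canonical isomorphisms gluing them, by the same cocycle reasoning used below, into a relatively projective resolution $r_U:M(U)\to U$. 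The same applies to $R$.

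Next I would build the descended groupoid. Applying the functoriality of \ref{main} to the \'etale maps $s$ and $t$ produces unique isomorphisms $\phi_s:s^*M(U)\to M(R)$ and $\phi_t:t^*M(U)\to M(R)$. Composing the base-change projections $s^*M(U)\to M(U)$ and $t^*M(U)\to M(U)$ with $\phi_s^{-1}$ and $\phi_t^{-1}$ yields two \'etale maps $\tilde s,\tilde t:M(R)\rightrightarrows M(U)$. The unit, inverse, and multiplication of $R\rightrightarrows U$ are themselves \'etale over the relevant factors, so transporting them through the $\phi$'s equips $M(R)\rightrightarrows M(U)$ with candidate groupoid structure maps. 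I then set $\mathcal X'=[M(U)/M(R)]$, with $r_{\mathcal X}:\mathcal X'\to\mathcal X$ the morphism induced by $r_U$.

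The main obstacle is verifying that $M(R)\rightrightarrows M(U)$ is genuinely a groupoid, i.e. that the cocycle and associativity conditions hold on the triple overlap $R\times_{s,U,t}R$. This is precisely where the \emph{uniqueness} clause of \ref{main} does the work: pulling back the $\phi$-isomorphisms along the two projections and the multiplication map of $R$ produces, on the triple fiber product, two isomorphisms with the same source and target, which must therefore coincide. This forces the descent datum to satisfy the cocycle condition and the transported structure maps to obey the groupoid axioms, so that $\mathcal X'$ is a bona fide DM stack. No further computation is needed once uniqueness is invoked; this step carries all the logical weight but reduces to bookkeeping.

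Finally I would read off the asserted properties of $r_{\mathcal X}:\mathcal X'\to\mathcal X$, all of which are \'etale-local on the target and hence determined by $r_U:M(U)\to U$. Smoothness of $\mathcal X'$ follows from smoothness of $M(U)$. Representability holds because $\mathcal X'\times_{\mathcal X}U\cong M(U)$ is a scheme and $U\to\mathcal X$ is a surjective \'etale representable cover. Properness, birationality, relative projectivity, and the isomorphism over the smooth locus all descend from the corresponding properties of $r_U$ granted by \ref{main}, completing the argument.
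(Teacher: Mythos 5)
Your proposal is correct and follows essentially the same route as the paper: resolve the atlas and the relations via \ref{main}, use the uniqueness clause of the \'etale functoriality to transport the groupoid structure to $M(R)\rightrightarrows M(U)$, and take $\mathcal X'=[M(U)/M(R)]$. The paper's own proof is a terse version of exactly this; you merely make explicit the details it leaves implicit (gluing resolutions over quasi-projective opens of a non-quasi-projective atlas, and the cocycle verification on the triple fiber product).
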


\begin{proof}
We can represent $\mathcal X$ as an \'etale groupoid $(s,t):R\rightrightarrows X$. Functoriality in the \'etale topology gives unique isomorphisms $\phi_s:M(R)\to s^*M(X)$ and $\phi_t: M(R)\to t^*M(X)$, and the induced morphisms $(s\phi_s,t\phi_t):M(R)\rightrightarrows M(X)$ inherit canonically the structure of a groupoid, whose projection onto $\mathcal X$ is clearly representable.
\end{proof}

\begin{cor}\label{coroll}
The resolution functor commutes with smooth base change.
\end{cor}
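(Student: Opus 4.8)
The plan is to deduce functoriality for smooth morphisms from the étale case already granted by \ref{main}, by exploiting the local structure of smooth maps. First I would note that the assertion is étale-local on the source: given a smooth $f:Y\to X$ and an étale cover $\{g_\alpha:Y_\alpha\to Y\}$, the Main Theorem identifies each $g_\alpha^*M(Y)$ with $M(Y_\alpha)$, and the \emph{uniqueness} clause of \ref{main} forces any isomorphisms produced on the pieces to agree on the overlaps $Y_\alpha\times_Y Y_\beta$. Hence it is enough to construct $\phi_f:f^*M(X)\to M(Y)$ after replacing $Y$ by the members of a suitable étale cover. By the structure theorem for smooth morphisms, every point of $Y$ admits an étale neighbourhood over which $f$ factors as $Y'\to X\times\mathbf A^n\to X$, the first arrow being étale and the second the projection $\pi$. Étale functoriality disposes of the first arrow, so the whole statement collapses to the single case of the projection $\pi:X\times\mathbf A^n\to X$.

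For the projection I would prove directly that $M(X\times\mathbf A^n)\cong M(X)\times\mathbf A^n$, compatibly with the maps down to $X\times\mathbf A^n$, by following the resolution algorithm through the added smooth factor. At a geometric point $(x,a)$ the complete local ring of $X\times\mathbf A^n$ is $\hat{\mathcal O}_x[[t_1,\dots,t_n]]$ with $G_x$ acting trivially on the $t_j$; thus the geometric stabilizer, the tame linear action, and therefore the invariant $i$ coincide with those of $X$, giving $i(X\times\mathbf A^n)=i(X)$. Since $i$ is constant along the $\mathbf A^n$-fibres, the maximal-invariant locus selected as a blow-up centre has the shape $Z\times\mathbf A^n$ for $Z\subset X$ the corresponding centre on $X$, and blowing up $Z\times\mathbf A^n$ returns $(\Bl_Z X)\times\mathbf A^n$; the same holds verbatim for the weighted centres of \ref{weighted}. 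It then remains to see that the auxiliary stacky constructions ignore the smooth factor: the pseudoreflections of \ref{pseudo} involve only the singular directions, so $(X\times\mathbf A^n)^V=X^V\times\mathbf A^n$; the Cartification of a divisor pulled back from $X$ satisfies $(X\times\mathbf A^n)(\pi^*\mathcal D)\cong X(\mathcal D)\times\mathbf A^n$ by \ref{cartier}; and the formation of the GC quotient commutes with flat base change, hence with the projection $X\times\mathbf A^n\to X$. Substituting these identities into the induction defining $M$ yields $M(X\times\mathbf A^n)=M(X)\times\mathbf A^n$.

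Finally I would reassemble the local isomorphisms into the global one. Over the chosen étale cover the two reductions above furnish canonical isomorphisms $f^*M(X)\cong M(Y)$, and uniqueness in \ref{main} shows they satisfy the cocycle condition on triple overlaps, so étale descent glues them into a unique $\phi_f:f^*M(X)\to M(Y)$. The genuinely delicate point, and the main obstacle, is the step-by-step verification that \emph{every} operation in the inductive construction of $M$—specifically the weighted blow-ups, the passage to the Vistoli covering stack and its Cartification, and the final GC quotient—really commutes with the factor $\mathbf A^n$; once this compatibility is secured for a single added variable it propagates through the whole algorithm, and the corollary follows.
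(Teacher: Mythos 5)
Your proposal is correct and is essentially the paper's argument: the paper's entire proof is the citation ``\cite{10} 3.9.2,'' and the proof of that cited result is precisely your route --- \'etale descent of the isomorphisms via the uniqueness clause of \ref{main}, factorization of a smooth morphism \'etale-locally through the projection $X\times\mathbf A^n\to X$, and verification that the algorithm commutes with the added smooth factor. Your explicit checks that the Vistoli cover, the Cartification (via \ref{cartier}), and the GC quotient (flat base change) ignore the $\mathbf A^n$ factor simply spell out the algorithm-specific compatibilities that the paper leaves implicit in the citation.
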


\begin{proof}
This is \cite{10} 3.9.2.
\end{proof}

The rest of the manuscript will be devoted to the proof of \ref{main}.

\vspace{0.5cm}

\subsection{Step 1: Reduction to tame cyclic quotient singularities.}\label{step1}

\noindent We can assume that $X=X_j$ for some index $j$. The invariant on which the inductive argument builds upon is $i(X)=\max_{x\in X}|G_x|$. By \ref{fixed} the subvariety $Z$ consisting of points with geometric stabilizer of maximal cardinality is smooth, so let $p_Z:Y=Bl_ZX\to X$ be its blow-up with exceptional divisor $E$. If $i(Y)<i(X)$ we are done, otherwise necessarily $i(Y)=i(X)$ and $E$ is $\mathbf Q$-Cartier but not Cartier, whence there is a non trivial Cartification cover $q_E:Y(E)\to Y$.

\begin{claim} 
$i(Y(E))<i(Y)$.
\end{claim}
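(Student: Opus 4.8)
The plan is to compute, étale-locally on $Y$, the effect of the Cartification $q_E$ on geometric stabilizers, and to show that it strictly decreases the order of every stabilizer of maximal size while never increasing any other. Write $l=i(Y)=i(X)$. First I would localize the problem: since $q_E$ is an isomorphism exactly over the locus where $E$ is already Cartier, and since over $Y\setminus E\cong X\setminus Z$ every stabilizer has order $<l$ by the very definition of $Z$, it suffices to bound the stabilizers of $Y(E)$ at geometric points lying over $E$.

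Next I would pin down the points of $Y$ whose stabilizer has order $l$. Near a point $z$ in the image of $Z$, present $X$ étale-locally as $U/G_z$ with $U$ smooth, so that $Y$ is étale-locally $(\Bl_{\tilde Z}U)/G_z$, where $\tilde Z\subset U$ is the fixed locus mapping to $Z$. A geometric point $y'\in E$ with $|G_{y'}|=l$ corresponds to a $G_z$-fixed line in the normal space to $\tilde Z$. Since $\tilde Z$ is precisely the fixed locus (Corollary \ref{fixed}), the group $G_z$ has no nonzero invariant vector in that normal space, so it acts on such a line by a nontrivial character; equivalently, by Proposition \ref{henselian} the stabilizer $G_{y'}$ acts on a local generator $\tilde f$ of the exceptional divisor through a character $\chi\colon G_{y'}\to k^*$ whose order $s=n(y')$ satisfies $s>1$. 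In particular $E$ is not Cartier at any order-$l$ point, so such points survive only over the ramification of $q_E$.

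The heart of the matter is the local identification of the Cartification. I would show that, étale-locally at $y'$, writing $K=\ker\chi\trianglelefteq G_{y'}$ (a normal subgroup of index $s$), one has $Y(E)\cong U/K$: indeed $\tilde f$ is $K$-invariant, hence descends to $U/K$, where it generates $E$ and satisfies $\tilde f^{\,s}=f$, so $U/K=\mathcal O_{y'}[\tilde f]/(\tilde f^{\,s}-f)$ is precisely the local ring $\mathcal O_{y'}[T]/(T^{n(y')}-f)$ of the Cartification. Consequently every geometric stabilizer of $Y(E)$ over $y'$ is a subgroup of $K$, of order dividing $|G_{y'}|/s\le l/s<l$. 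Combined with the previous paragraph — where $E$ is Cartier the map is an isomorphism and the stabilizer, already $<l$, is unchanged — this yields $i(Y(E))<l=i(Y)$.

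I expect the main obstacle to be the local identification $Y(E)\cong U/\ker\chi$, together with the bookkeeping that makes it uniform for non-abelian $G_{y'}$: the character $\chi$ is supplied by Proposition \ref{henselian} for an arbitrary finite group, and $K=\ker\chi$ is automatically normal with cyclic quotient, but one must verify that the descent of $\tilde f$ matches the Cartification's universal property on the nose, rather than merely through a dominating cover. A secondary technical point is checking that no new order-$l$ stabilizer is created away from $E$, which follows from the isomorphism statement over the Cartier locus but should be recorded explicitly.
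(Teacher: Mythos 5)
Your proposal shares the paper's skeleton: at a maximal point the stabilizer acts on a local generator of $E$ through a character $\chi$ supplied by Proposition \ref{henselian}, the Cartification is locally the eigen-root extension $\mathcal O[T]/(T^{s}-f)$, i.e.\ the invariants under $\ker\chi$, so the stabilizer drops from $G_{y}$ to $\ker\chi$, and stabilizers never grow elsewhere. But the step on which you hang both geometric inputs --- the classification of order-$l$ points of $E$ and the nontriviality of $\chi$ --- is the \'etale-local presentation $Y\cong(\Bl_{\tilde Z}U)/G_{z}$, and this presentation is false in general. Since blow-ups commute with flat base change, $Y=\Bl_{Z}X$ pulls back over the chart $U$ to the blow-up of the ideal $I_{Z}\mathcal O_{U}$, which is generated by the \emph{invariant} functions vanishing on $\tilde Z$ and is in general strictly smaller than $I_{\tilde Z}$, and the two blow-ups are genuinely different models. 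Concretely, let $\mu_{3}$ act on $\mathbf A^{2}$ with characters $(1,2)$, so that $X=\Spec k[x^{3},xy,y^{3}]$ is the $A_{2}$-singularity $uv=w^{3}$, $Z$ is the vertex, $\tilde Z$ the origin, and $I_{Z}\mathcal O_{U}=(x^{3},xy,y^{3})\neq(x,y)$. Then $\Bl_{Z}X$ is already smooth, with exceptional divisor consisting of two curves, whereas $(\Bl_{0}\mathbf A^{2})/\mu_{3}$ has an irreducible exceptional divisor carrying two singular points of type $\frac{1}{3}(1,1)$: your model predicts points of $Y$ with stabilizer of order $3$ that simply do not exist on $\Bl_{Z}X$. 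Consequently your identification of the order-$l$ points of $E$ with $G_{z}$-fixed lines in the normal space to $\tilde Z$, the nontriviality of $\chi$ derived from it, and the local computation $Y(E)\cong V/\ker\chi$ in a blow-up chart, are all resting on an incorrect local description of $Y$.

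The paper avoids this trap by never opening a blow-up chart: it works intrinsically on the Vistoli cover $Y^{V}$ of $Y$ itself. At $y\in Y^{V}$ with $|G_{y}|=i(Y)$, invariance of $E_{y}$ plus Proposition \ref{henselian} produce the character $\chi_{E,y}$ directly on $\mathcal O_{Y^{V},y}$; its nontriviality is exactly the assertion, made before the claim, that $i(Y)=i(X)$ forces $E$ to be non-Cartier at such points (if $\chi_{E,y}$ were trivial, the eigen-generator would descend to $\mathcal O_{Y}$ and $E$ would be Cartier there); and then $\mathcal O_{Y(E),q_{V}(y)}$ is, by the local construction of the Cartification, generated over $\mathcal O_{Y}$ by an $s$-th root of $f$, whence $G_{q_{V}(y)}=\ker\chi_{E,y}$ and the invariant drops. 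Your kernel computation, your treatment of the Cartier locus, and your remark that the residual stabilizers are subgroups of $\ker\chi$ all agree with this and are fine; to repair the proposal, replace the chart presentation by the intrinsic argument on $Y^{V}$, and prove non-Cartierness of $E$ at maximal points directly --- that is the one substantive fact your fixed-line computation was meant to deliver, and it cannot be obtained from the smooth blow-up upstairs, because that is not what $\Bl_{Z}X$ is.
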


\begin{proof}
Consider the natural projection $q_V:Y^V\to Y(E)$ and let $y\in Y^V$ be a geometric point with $|G_y|=i(Y)$. The projection induces a morphism of strictly henselian local rings at $y$, $\mathcal O_{Y(E),q_V(y)}\to \mathcal O_{Y^V,y}$. The action of $G_y$ on $\mathcal O_{Y^V,y}$ leaves $E_y$ invariant, so by \ref{henselian} it is given by a non-trivial character $\chi_{E,y}:G_y\to k^*$. We see immediately that $\mathcal O_{Y,q_Eq_V(y)}$ is the subring of $\mathcal O_{Y(E),q_V(y)}$ of elements fixed by the cyclic action of $Im(\chi_{E,y})$. Thus $G_{q_V(y)}=ker(\chi_{E,y})$ and the claim follows.
\end{proof}

Now we can apply our inductive hypothesis, in the formulation \ref{corol}, and obtain a representable, proper, birational, relatively projective morphism $r_Y:Y_1\to Y(E)$ with $Y_1$ smooth DM stack, along with a GC quotient morphism $m:Y_1\to X_1$, \cite{6} 1.1. Since $Y$ is quasi-projective and $r_Y$ is relatively projective, $X_1$ is quasi-projective. The total pullback of $r_Y^*q_E^*E$ naturally descends to a $\mathbf Q$-Cartier divisor $E_1$ on $X_1$, i.e. $m^*E_1=r_Y^*q_E^*E$. The pair $(X_1,E_1)$ enjoys particularly good properties, that we can summarize as follows:

\begin{claim}
$Y_1=X_1(E_1)$.
\end{claim}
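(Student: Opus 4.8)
The plan is to produce a canonical morphism $\alpha\colon Y_1\to X_1(E_1)$ over $X_1$ and to show it is an isomorphism, with essentially all of the content concentrated in a single local faithfulness statement. To build $\alpha$ I would invoke the universal property of the Cartification. By the defining property of $Y(E)$ the divisor $q_E^*E$ is Cartier, so its total transform $r_Y^*q_E^*E=m^*E_1$ is Cartier on the smooth stack $Y_1$. Feeding the coarse-moduli morphism $m\colon Y_1\to X_1$, whose pullback of $E_1$ is Cartier, into the universal property of the Cartification $p_{E_1}\colon X_1(E_1)\to X_1$ then yields a unique factorization $m=p_{E_1}\circ\alpha$.

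Next I would reduce the assertion to a degree computation. As $m$ is proper and quasi-finite and $p_{E_1}$ is finite and separated, $\alpha$ is finite; moreover $X_1(E_1)$ is normal, being defined by an integral closure. A finite morphism onto a normal target is an isomorphism precisely when it is birational, i.e.\ generically of degree one, so it suffices to compute this degree étale-locally. Writing $Y_1=[\Spec R/C_x]$ with $R$ regular strictly henselian and $X_1=\Spec R^{C_x}$, Proposition \ref{henselian} realizes the local equation $g$ of $m^*E_1$ as a $C_x$-eigenvector with character $\chi\colon C_x\to k^*$, and then $X_1(E_1)$ is, locally, the integral closure of $R^{C_x}$ in $\mathrm{Frac}(R^{C_x})(g)$, where $g^{|{\rm im}\chi|}\in R^{C_x}$. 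When $\chi$ is faithful, $g$ has degree $|C_x|=[\mathrm{Frac}(R):\mathrm{Frac}(R^{C_x})]$ over the invariant field, whence $\mathrm{Frac}(R^{C_x})(g)=\mathrm{Frac}(R)$ and the integral closure is all of $R$; therefore $X_1(E_1)=[\Spec R/C_x]=Y_1$ locally, and $\alpha$ is an isomorphism.

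The hard part, and the heart of the claim, is thus the faithfulness of $\chi$: the entire stacky structure of $Y_1$ must be generated cyclically across the marked divisor $E_1$, with no residual stabilizer acting trivially on it. That this is exactly what is needed is visible in the local picture. If $\chi$ had a nontrivial kernel $C'$, then $\Spec R^{C'}$ would be a strictly intermediate cover carrying quotient singularities invisible to $E_1$, the integral closure above would recover only $R^{C'}\neq R$, and $\alpha$ would fail to be birational.

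To prove faithfulness I would track the faithful character through the resolution. The stabilizers of $Y_1=M(Y(E))$ are those created by the weighted blow-ups of the inductive resolution, whose local model \ref{weighted} has the cyclic group acting faithfully across its exceptional divisor $\{y_i=0\}$; since $E_1$ is the descent of the total transform of $E$, along each such locus its support contains that exceptional divisor, so the stabilizer does act faithfully across $E_1$. Proposition \ref{faithful} then propagates this faithfulness through the injective, equivariant comparison maps of local rings needed to pass between the atlas, the Vistoli cover $Y^V$ and $Y(E)$, allowing the local identifications to be glued into the single isomorphism $\alpha$. Carrying this faithfulness bookkeeping coherently through every stage of the resolution algorithm is where I expect the genuine difficulty to lie.
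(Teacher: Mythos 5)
Your overall reduction coincides with the paper's: everything hinges on showing that the local cyclic group $C_y$ of the GC quotient $m\colon Y_1\to X_1$ acts with a faithful character on the local equation of $m^*E_1=r_Y^*q_E^*E$, and your universal-property morphism $\alpha$ together with the strictly henselian computation via \ref{henselian} is a reasonable, if more elaborate, packaging of the local identification that the paper leaves implicit. But the step where you actually prove faithfulness is wrong, and it is the heart of the claim. You argue that the stabilizers of $Y_1$ are ``created by the weighted blow-ups of the inductive resolution'' and act faithfully across the corresponding exceptional divisors, and that since the support of $m^*E_1$ contains those exceptional divisors, faithfulness across $m^*E_1$ follows. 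The last inference fails: faithfulness across a divisor is a statement about the character of the action on its single local equation, and $m^*E_1$ is a \emph{total} transform, whose local equation is a product of eigenfunctions --- exceptional components with multiplicities times strict transforms. If $C_y$ of order $l$ acts by $y\mapsto \chi y$ on the exceptional equation and by $x\mapsto \chi^a x$ on another factor, then it acts on $xy^b$ by $\chi^{a+b}$, which is faithful only when $(a+b,l)=1$; containment of the exceptional divisor in the support gives nothing. Moreover your premise is unavailable: the inductive hypothesis hands you $r_Y\colon Y_1\to Y(E)$ only as a representable, proper, birational morphism --- a black box --- so you have no control over which blow-up ``created'' a given stabilizer of $Y_1$.

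The paper's argument avoids all of this in two lines: by construction of the Cartification, the stabilizers of $Y(E)$ act on the tautological local generator $T$ (with $T^n=f$) of the now-Cartier divisor $q_E^*E$ by the standard faithful character, and the restriction of a faithful character of a cyclic group to the subgroup $C_y$ (injected via representability of $r_Y$) stays faithful; then, since $r_Y$ is birational, the induced equivariant map of strictly henselian local rings is injective, and \ref{faithful} transports faithfulness from $T$ to its pullback, which \emph{is} the local equation of the total transform --- no component-by-component bookkeeping ever arises. You do cite \ref{faithful}, but you apply it to comparison maps ``between the atlas, the Vistoli cover $Y^V$ and $Y(E)$'' rather than to the one map that matters, $\mathcal O_{Y(E),r_Y(y)}\to\mathcal O_{Y_1,y}$. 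A smaller point: your Zariski-main-theorem step (``finite plus birational onto a normal target implies isomorphism'') is stated too loosely for stacks, since $m$, hence a priori $\alpha$, is not representable --- the coarse map $[\mathbf A^1/\mu_2]\to\mathbf A^1$ is proper, quasi-finite and birational onto a normal target without being an isomorphism. Once pointwise faithfulness is in hand, your local computation already exhibits $\alpha$ as an isomorphism on charts, so that detour is unnecessary as well as gapped.
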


\begin{proof}
In our setting the GC quotient morphism $m:Y_1\to X_1$ is locally a cyclic cover, say given by the tame cyclic action of $C_y$ in a neighborhood of a geometric point $y\in Y_1$. The assertion we need to prove is that $C_y$ acts faithfully across $r_Y^*q_E^*E$. Observe that $C_y$ is faithful on $q_E^*E\subset Y(E)$ in a neighborhood of $r_Y(y)$, and since $Y_1\to Y(E)$ is birational, whence injective on local rings, we conclude by \ref{faithful} that $C_y$ is faithful on $r_Y^*q_E^*E$ as well. 
\end{proof}

It is pretty straightforward to control the behaviour of our construction under \'etale base change:

\begin{proposition}
Let $f:X'\to X$ be an \'etale morphism, then the square
$$\begin{CD}
X'_1 @>>>X_1\\
@VVV @VVV\\
X' @>>>X
\end{CD}$$
is fibered.
\end{proposition}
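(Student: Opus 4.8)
The plan is to factor the Step 1 construction into its four constituent operations---blow-up of the maximal-stabilizer locus, Cartification, the inductive resolution, and passage to the GC quotient---and to check that each one commutes with the étale base change $f$; composing the resulting identifications yields the claim. Throughout I would use that an étale morphism induces isomorphisms on strictly henselian local rings, hence preserves geometric stabilizers, so that the stratification of $X$ by $|G_x|$ pulls back to that of $X'$ and, in particular, the smooth center $Z \subset X$ (\ref{fixed}) pulls back to the corresponding center $Z' \subset X'$.

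First I would treat the blow-up: since blowing up commutes with flat base change, the induced morphism $f_Y \colon Y' = \Bl_{Z'} X' \to Y$ is étale and identifies $Y' \cong X' \times_X Y$, carrying the exceptional divisor to $E' = f_Y^* E$. The Cartification step is then handled directly by \ref{cartier}, which gives a canonical isomorphism $Y'(E') = Y'(f_Y^* E) \cong f_Y^* Y(E) \cong X' \times_X Y(E)$. For the resolution step I would invoke the inductive hypothesis: the Claim above shows $i(Y(E)) < i(Y) = i(X)$, so functoriality of the resolution functor under the étale morphism $g \colon X' \times_X Y(E) \to Y(E)$ applies and yields $Y'_1 = M\big(Y'(E')\big) \cong g^* M\big(Y(E)\big) \cong \big(X' \times_X Y(E)\big) \times_{Y(E)} Y_1 \cong X' \times_X Y_1$.

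It remains to pass to GC quotients, and this is the step I expect to be the main obstacle, since coarse quotients do not commute with arbitrary base change. The point to exploit is that the universal property of the GC quotient produces a structural morphism $X_1 \to X$ through which $Y_1 \to X$ factors. Pulling back the étale (hence flat) morphism $f$ along $X_1 \to X$ gives a flat morphism $X_1 \times_X X' \to X_1$, and flat base change for GC quotients (\cite{6}) identifies the GC quotient of $Y_1 \times_{X_1} (X_1 \times_X X') \cong X' \times_X Y_1$ with $X_1 \times_X X'$. Combining this with the identification $Y'_1 \cong X' \times_X Y_1$ of the previous paragraph, and with the fact that $X'_1$ is by construction the GC quotient of $Y'_1$, yields the desired isomorphism $X'_1 \cong X' \times_X X_1$ over $X'$, proving that the square is fibered.
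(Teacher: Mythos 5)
Your proposal is correct and follows essentially the same route as the paper: the paper's proof decomposes the construction into the same four operations (blow-up of the center, Cartification via \ref{cartier}, the inductive resolution, and the GC quotient) and checks each square in the resulting diagram is fibered, concluding with flat base change for GC quotients from \cite{6}. Your treatment is merely more explicit at the two points the paper leaves implicit---that \'etale morphisms preserve geometric stabilizers so the center pulls back, and the precise bookkeeping of the flat base change at the GC quotient step---but the argument is the same.
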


\begin{proof}
Consider the diagram
$$\begin{CD}
X'@<<<  Y'=Bl_{f^*Z}X' @<<< Y(f^*E)'@<\text{Resolution}<< Y_1'@>\text{GC quotient}>> X_1'\\
@V\text{\'etale}VV @VVV @VVV @VVV @VVV \\
X@<<<  Y=Bl_Z(X) @<<< Y(E)@<\text{Resolution}<< Y_1@>\text{GC quotient}>> X_1\\
\end{CD}$$ \\
The two leftmost squares are fibered, by \ref{cartier}. The third square from the left is also fibered, by our inductive hypothesis, thus $Y_1'= Y_1\times_X X'$, and we conclude since the GC quotient functor commutes with flat base change - as it follows by definition, \cite{6} 1.8.
\end{proof}

We conclude the first step of the proof by summarizing what we got so far.

\begin{summary}\label{summ}: Given a quasi-projective variety $X$ satisfying the assumptions of \ref{main}, there exists a functor $X\to (X_1,E_1=E_1(X),p_X)$ where $X_1$ is a quasi projective variety with tame cyclic quotient singularities with $i(X_1)\leq i(X)$, $E_1$ is a $\mathbf Q$-Cartier divisor such that $X_1^V\to X_1(E_1)$ is an isomorphism. $p_X:X_1\to X$ is a proper, relatively projective, birational morphism. The functor commutes with \'etale base change, that is to say for every \'etale morphism $f:Y\to X$ there is a unique isomorphism $\psi_f:f^*X_1\to Y_1$ such that $\psi_f^*p_Y^*E_1(Y)=p_X^*E_1(X)$. Here is a diagram summarizing our construction:
$$\begin{CD}
q_V^*q_E^*E\subset Y_V\\
@V\text{Vistoli}VV\\
q_E^*E\subset Y(E) @<\text{resolution}<< m^*E_1\subset Y_1\\
@V\text{Cartification}VV @V\text{GC quotient=Cartification}VV\\
E\subset Bl_ZX=Y@<<< E_1\subset X_1\\
@VVV\\
Z\subset X
\end{CD}$$
\end{summary}

\vspace{0.3cm}

\subsection{Step 2: Reduction of the invariant for tame cyclic quotient singularities.}\label{step2}
The second step of the proof works out an algorithmic procedure to reduce the invariant $i(X)$, if we are given a pair $(X,E)$ where $X$ is a quasi-projective variety with tame cyclic quotient singularities, $E$ is a reduced divisor whose connected components are irreducible, and the canonical projection $X^V\to X(E)$ is an isomorphism - meaning that the local cyclic groups defining the Vistoli cover $X^V\to X$ act with a faithful character on the pullback of $E$ in $X^V$. The key observation we need to construct a resolution is \ref{glue}, that for such a pair $(X,E)$ there exists a choice of characters for the cyclic action whose corresponding local weighted algebras form a globally well defined sheaf of algebras. Blowing it up, \ref{decomposition}, we obtain a quasi-projective variety with cyclic quotient singularities whose stabilizers are smaller, but these need not be tame anymore. However the cyclic quotients need not be tame for weighted blow-ups to make sense, whence further applications of \ref{decomposition}, in the form \ref{quasi}, can be used to remove the non-tame stabilizers that might appear after the first blow-up. More precisely let $Z$ be the smooth subvariety of points with maximal geometric stabilizers. By assumption, the stabilizers are faithful across $E$, whence by \ref{glue} there is a sheaf of algebras $B_Z$ induced by the faithful actions of the generic stabilizers at $Z$ along $E$. Let $p_w:X^w:=\Proj_{B_Z}X\to X$ and consider the pair $(X^w,E^w:=p_w^*E)$. We have two cases:
\begin{itemize}
\item[(i)] $X^w$ has tame cyclic quotient singularities. In this case $i(X^w)<i(X)$ and the main theorem follows.
\item[(ii)] $X^w$ has non-tame, diagonal cyclic quotient singularities. In this case we can eliminate the non-tame cyclic stabilizers by way of the following.
\end{itemize}

\begin{lemma}\label{quasi}
Let $(X,D)$ be a pair where $X$ is a quasi-projective variety with diagonal cyclic quotient singularities and $D=\sum_{i=1}^t D_i$ is a reduced divisor. Assume that, if $x\in X$ is such that $C_x$ is not tame, then $x\in D$ and $C_x$ is faithful across at least one irreducible component of $D$. Then there exists a resolution functor $(X,D)\to (M(X,D),u_X)$ where $M(X,D)$ is a quasi projective variety with tame cyclic quotient singularities such that $i(M(X,D))\leq \max_{x\in X}|C_x|$, $u_X:M(X,D)\to X$ is a proper, birational, relatively projective morphism which is an isomorphism over the smooth locus of $X$. The  functor commutes with \'etale base change, that is to say for every \'etale morphism $f:(Y,D_Y)\to (X,D)$ there is a unique isomorphism $\phi_f:f^*M(X,D)\to M(Y,D_Y)$.
\end{lemma}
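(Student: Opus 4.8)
The plan is to induct on the invariant $\nu(X,D)=\max\{|C_x| : x\in X,\ C_x\ \text{not tame}\}$, with the convention $\nu(X,D)=1$ when no non-tame point exists; this is the base case, where $X$ already has tame cyclic quotient singularities and I set $M(X,D)=X$, $u_X=\mathrm{id}$. For the inductive step I remove the worst non-tame locus by a single globally defined weighted blow-up, after which $\nu$ strictly drops and the hypotheses of the lemma are restored, so the procedure iterates and terminates. Concretely, let $Z\subset X$ be the locus of points $x$ with $C_x$ not tame and $|C_x|=\nu(X,D)$. Exactly as in \ref{fixed}, the diagonal form of the action exhibits the fixed locus of $C_x$ as a coordinate subspace in a suitable regular system of parameters, so $Z$ is smooth; by hypothesis $Z\subset D$ and the generic stabilizer along each component of $Z$ is faithful across some component of $D$. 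Proposition \ref{glue} then globalizes the local weighted algebras attached to the faithful characters into a sheaf $B_Z$ on $X$, and I set $X'=\Proj_{B_Z}X\to X$, a proper, birational, relatively projective morphism that is an isomorphism away from $Z$.

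Next I read off the local structure from \ref{decomposition}: \'etale-locally transverse to $Z$ the morphism $X'\to X$ is the weighted blow-up adding the ray corresponding to the faithful direction of $D$, and the diagonal cyclic quotient singularities it produces have orders $a_1,\dots,a_{n-1}$, each strictly smaller than $l=\nu(X,D)$. Away from $Z$ nothing changes, and there every non-tame stabilizer already has order strictly below $\nu(X,D)$; hence every non-tame stabilizer of $X'$ has order $<\nu(X,D)$. To iterate I must produce a divisor $D'$ reinstating the hypotheses, and I take $D'$ to be the union of the exceptional divisor $E'$ of $X'\to X$ with the strict transforms of the $D_i$. The local description recalled in \ref{weighted} shows that in the $i$-th chart $E'=(y_i=0)$ and the lifted action satisfies $y_i\mapsto\zeta y_i$ with $\zeta$ primitive, so every newly created non-tame stabilizer is faithful across $E'$; the non-tame stabilizers surviving from $X$ sit away from $Z$, where $X'\to X$ is an isomorphism, hence remain faithful across the strict transform of the relevant $D_i$ (this is the trivial case of \ref{faithful}). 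Thus $(X',D')$ again satisfies the hypotheses, with $\nu(X',D')<\nu(X,D)$.

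Iterating yields a finite tower $(X,D)=(X_0,D_0)\leftarrow(X_1,D_1)\leftarrow\cdots\leftarrow(X_m,D_m)$ with strictly decreasing $\nu$, terminating once no non-tame stabilizer remains; I set $M(X,D)=X_m$ and let $u_X$ be the composite, which is proper, birational, relatively projective and an isomorphism over the smooth locus as a composite of such morphisms. Since every order produced by \ref{decomposition} is strictly smaller than the order at the center, no stabilizer order ever exceeds $\max_{x\in X}|C_x|$, giving $i(M(X,D))\le\max_{x\in X}|C_x|$. Functoriality under \'etale base change follows because every ingredient is \'etale-local: stabilizers, and therefore $Z$ and $\nu$, pull back isomorphically; \ref{glue} and \ref{decomposition} commute with \'etale localization; and the marked divisor $D'$ is defined \'etale-locally. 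An inductive application of these compatibilities produces, for each \'etale $f:(Y,D_Y)\to(X,D)$, the unique isomorphism $\phi_f:f^*M(X,D)\to M(Y,D_Y)$.

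I expect the \emph{main obstacle} to be the reinstatement of the hypotheses after each blow-up. One must verify not only that the new non-tame singularities are faithful across $E'$, but that the updated divisor $D'$ still has irreducible components to which \ref{glue} applies, that its incidence with the new singular loci is compatible with selecting, at each non-tame point, a single component realizing the faithful character, and that no stabilizer of order $\ge\nu(X,D)$ is inadvertently created along $E'$. Carrying out these faithfulness and gluing checks uniformly, while keeping the entire construction strictly local in the \'etale topology so that functoriality is preserved, is where the genuine work lies; termination and the order bound are then immediate consequences of \ref{decomposition}.
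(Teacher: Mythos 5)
Your proposal is correct and follows essentially the same route as the paper: induction on the maximal non-tame stabilizer order, the smooth center $Z\subset D$ of worst non-tame points, \ref{glue} to globalize the weighted algebras, $\Proj_{B_Z}X$ with the order drop from \ref{decomposition}, the new marked divisor taken to be the exceptional divisor plus the strict transforms, faithfulness across the exceptional divisor read off the charts of \ref{weighted}, and functoriality from the \'etale-local nature of every step. One small correction: the surviving non-tame loci need not ``sit away from $Z$'' --- their closures can meet the blow-up center, so their proper transforms pass through the exceptional divisor where $X'\to X$ is not an isomorphism --- but the citation you make of \ref{faithful} (injectivity on local rings under a birational morphism preserves the faithful character) is exactly how the paper handles this, in its non-trivial rather than trivial form.
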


\begin{proof}
Let $j(X)=\max\{|C_x|$, $x\in X$ s.t. $C_x$ is not tame$\}$. Clearly the subvariety $Z$ of points with maximal non-tame geometric stabilizer is a smooth subvariety of $D$. We will proceed by induction on $j(X)$. Let $Y$ be a connected component of $Z$, and denote by $h(Y)$ the maximal index such that $Y\subset D_{h(Y)}$ and the stabilizer $C_Y$ is faithful across $D_{h(Y)}$. By \ref{glue} for every such $Y$ there is a sheaf of algebras $B_Y$ supported on $Y$ and induced by the faithful action of the geometric stabilizers across $D_{h(Y)}$. Let $B_Z=\sum_Y B_Y$, and denote by $p_1:X_1=\Proj_{B_Z}X\to X$ with exceptional divisor $D_{t+1}$ and set $D_1=p_1^*D=\sum D_i^1+D_{t+1}$ where of course $D_i^1$ is the strict transform of $D_i$ under $p_1$. By \ref{decomposition} $j(X_1)<j(X)$, and we claim that the pair $(X_1,D_1)$ still satisfies the assumptions of \ref{quasi}. Indeed let $Y_1$ be a connected component of $Z_1$, if $Y_1$ is not contained in $D_{t+1}$ then $Y_1$ is the proper transform of some component with non-tame stabilizer $Y\subset X$, and faithfulness across $D_{h(Y)}^1$ is granted by \ref{faithful}. Next if $Y_1\subset D_{t+1}$, then the local description of a weighted blow-up, as given at the beginning of \ref{weighted}, forces the geometric stabilizers along $Y_1$ to be faithful across $D_{t+1}$. By induction on $j(X)$ we conclude the existence of $(M(X,D),u_X)$. Functoriality in the \'etale topology is a trivial consequence of our iterative construction.
\end{proof}

We are ready to conclude the proof of the main theorem: starting with a quasi-projective variety with tame quotient singularities $X$, \ref{step1} gives a pair $(X_1,E_1)$ as in \ref{summ}. Running \ref{step2} with input $(X_1,E_1)$ we get a quasi-projective variety with tame cyclic quotient singularities $M(X_1^w,E_1^w)$ - where of course $M(X_1^w,E_1^w)=X^w_1$ in case i) above - such that $i(M(X_1^w,E_1^w))<i(X)$, along with a proper, relatively projective, birational morphism $M(X_1^w,E_1^w)\to X$. \qed

\end{document}